\documentclass[journal,twoside,web]{ieeecolor}
\usepackage{generic}
\usepackage{cite}
\usepackage{amsmath,amssymb,amsfonts}

\usepackage{amsthm}
\usepackage{bm}
\usepackage{graphicx}
\usepackage{subcaption}
\usepackage{hyperref}
\hypersetup{hidelinks}
\usepackage[font=footnotesize]{caption}
\usepackage{booktabs}
\usepackage{textcomp}
\let\labelindent\relax
\usepackage{enumitem}
\usepackage{float}
\usepackage{stfloats}
\usepackage{algpseudocodex}
\usepackage{algorithm}
\def\BibTeX{{\rm B\kern-.05em{\sc i\kern-.025em b}\kern-.08em
    T\kern-.1667em\lower.7ex\hbox{E}\kern-.125emX}}
\title{On the Optimality of Markovian Policies for Chance-Constrained Covariance Steering}
\author{Naoya Kumagai, \IEEEmembership{Graduate Student Member,~IEEE}, Kenshiro Oguri, \IEEEmembership{Member,~IEEE}
\thanks{This material is based upon work supported by the Air Force Office of Scientific Research under award number FA9550-23-10512. The authors are with the School of Aeronautics and Astronautics, Purdue University, West Lafayette, IN 47907 USA. Emails: \{nkumagai, koguri\}@purdue.edu.}}

\newcommand{\R}{\mathbb{R}}

\newcommand{\E}{\mathbb{E}}
\newcommand{\Symmetric}{\mathbb{S}}

\newcommand{\cov}[1]{\mathrm{Cov}(#1)}
\let\P\relax
\newcommand{\P}[1]{\mathbb{P}(#1)}
\let\E\relax
\newcommand{\E}[1]{\mathbb{E}[#1]}

\newcommand{\normal}{\mathcal{N}}

\newtheorem{theorem}{Theorem}

\newtheorem{lemma}{Lemma}
\newtheorem{proposition}{Proposition}

\newtheorem{definition}{Definition}

\theoremstyle{definition} 
\newtheorem{remark}{Remark}

\usepackage{cleveref}
\crefname{align}{}{}
\crefname{equation}{}{}
\crefname{figure}{Fig.}{Figs.}
\crefname{table}{Table}{Tables}
\crefname{theorem}{Theorem}{Theorems}
\crefname{definition}{Definition}{Definitions}
\crefname{lemma}{Lemma}{Lemmas}
\crefname{remark}{Remark}{Remarks}
\crefname{assumption}{Assumption}{Assumptions}
\crefname{proof}{Proof}{Proofs}
\crefname{algorithm}{Algorithm}{Algorithms}
\crefname{problem}{Problem}{Problems}
\crefname{proposition}{Proposition}{Propositions}
\crefname{corollary}{Corollary}{Corollaries}
\crefname{section}{Section}{Sections}
\crefname{appendix}{Appendix}{Appendices}

\allowdisplaybreaks 

\def\showChanges{0} 

\newcommand{\highlight}[1]{%
    \ifnum\showChanges=1
        \textcolor{blue}{#1}%
    \else
        #1%
    \fi
}

\newcommand{\todohighlight}[1]{%
    \ifnum\showChanges=1
        \textcolor{red}{#1}%
    \else
        #1%
    \fi
}

\newcommand{\mathhl}[1]{
    \ifnum\showChanges=1
        \mathcolor{blue}{#1}
    \else
        #1
    \fi
}

\usepackage{marginnote}
\usepackage{tcolorbox}
\newcommand{\margincomment}[1]{%
    \ifnum\showChanges=1
    \textcolor{blue}{\setlength{\baselineskip}{10pt}\footnotesize\raggedright\marginnote{#1}}%
    \else
    \fi
}

\NewDocumentEnvironment{subalign}{b}{
    \begin{subequations}
    \allowdisplaybreaks        
    \begin{align}
        #1
    \end{align}
    \end{subequations}
}{}

\NewDocumentEnvironment{subalignsmall}{b}{
    \begin{subequations}
    \small                     
    \allowdisplaybreaks        
    \begin{align}
        #1
    \end{align}
    \end{subequations}
}{}

\begin{document}
\maketitle

\begin{abstract}
	Many studies on finite-horizon stochastic optimal control, including covariance steering, parameterize control policies as state-history-affine. This parameterization enables a convex reformulation, thereby yielding a tractable solution method.
	However, the necessity of dependence on previous states has not been well established. \textit{Is this dependence necessary, or merely an artifact of the convex reformulation?} We show that it is an artifact that can be removed losslessly.
	Given an optimal solution of the state-history-affine formulation, we construct a deterministic Markovian policy which is affine in the current state. 
	We show that, even for the covariance steering problem with a broad class of commonly used state and control safety constraints, the synthesized Markovian policy almost surely produces the same control actions as the history-dependent policy and therefore the same state trajectories, cost, and moments. 
	Thus, every optimum of the history-dependent formulation admits a lossless Markovian transformation. Geometrically, the history-dependent formulation lifts the policy space for convexity, and its optimal solution can be projected back to the Markovian policy space.
	We extend the analysis to output feedback and a convex upper-bounding surrogate for value-at-risk costs.
\end{abstract}

\begin{IEEEkeywords}
	Covariance steering, Semidefinite programming, Stochastic optimal control.
\end{IEEEkeywords}

\graphicspath{{../figures/}}

\section{Introduction}
\IEEEPARstart{C}{ovariance} steering has emerged as a promising approach for stochastic optimal control problems, where the goal is to steer the state of a stochastic system to a desired distribution within a finite time horizon. 
Its ability to \textit{design closed-loop uncertainty evolution}, rather than \textit{evaluate it after control synthesis}, has led to many applications requiring decision-making under uncertainty, such as path planning for autonomous vehicles \cite{okamotoOptimalStochasticVehicle2019} and spacecraft \cite{oguriChanceConstrainedControlSafe2024a,kumagaiRobustCislunarLowThrust2025}, as well as dynamic portfolio optimization \cite{skafDesignAffineControllers2010}.
The conic reformulations commonly used in covariance steering naturally allow the addition of probabilistic (chance) constraints on the state and control, which can specify safety regions and actuator limits \cite{charnesDeterministicEquivalentsOptimizing1963,vanhessemConicReformulationModel2002,okamotoOptimalCovarianceControl2018,okamotoOptimalStochasticVehicle2019}. While covariance control has been studied in both continuous- and discrete-time settings \cite{hotzCovarianceControlTheory1987,chenOptimalSteeringLinear2018,okamotoOptimalCovarianceControl2018,balciConstrainedMinimumVariance2024,liuOptimalCovarianceSteering2025}, we focus on the discrete-time setting in this work.

Recent solution methods for discrete-time finite-horizon covariance steering involve reformulating the problem as a convex optimization problem. Formulations in \cite{skafDesignAffineControllers2010,okamotoOptimalCovarianceControl2018,balciConstrainedMinimumVariance2024} use state-history-affine or disturbance-history-affine policies. These formulations can express chance constraints in convex form, \textit{at the expense of solving over history-affine policies}; when solving over this large space, both the number of decision variables and the online memory requirements scale quadratically in the horizon length. 
Offline, this results in large linear matrix inequalities (LMIs), which can cause memory issues and slow computation for the solver \cite{rapakouliasDiscreteTimeOptimalCovariance2023}; chance constraints further compound the issue by adding more LMIs \cite{kumagaiSquareRootFactorizedCovariance2026}.
Online, the resulting controller must store gains and state histories for all previous time steps; thus, this memory-dependent policy may be undesirable or even prohibitive for resource-constrained applications \cite{foxMinimuminformationLQGControl2016,changMemoryAwareEmbeddedControl2017}.
Alternative formulations in \cite{liuOptimalCovarianceSteering2025,rapakouliasDiscreteTimeOptimalCovariance2023} instead use Markovian policies that are affine in the current state. This approach results in deterministic reformulations with smaller LMIs and faster computation. However, it has several disadvantages, including nonconvex chance constraints (which necessitate sequential convex programming) and limited flexibility in the choice of cost function because the formulation is tightly connected to the lossless convexification method \cite{kumagaiSquareRootFactorizedCovariance2026}.
\cite{liuOptimalCovarianceSteering2025} establishes the optimality of this Markovian policy for covariance steering without chance constraints, which we refer to as \textit{unconstrained} covariance steering. Under chance constraints, however, the relationship between the state-history-affine and Markovian formulations has not been established.

Outside of covariance steering, several works in finite-horizon stochastic control provide insights into the policy structure that are relevant here. 
Disturbance-history-affine and state-history-affine feedback are shown to be equivalent in linear systems for robust control \cite{goulartOptimizationStateFeedback2006a} and covariance steering \cite{balciConstrainedMinimumVariance2024}.
\cite{zhangRelationshipOptimalState2023} compares the optimal controller for an infinite-horizon LQR problem with that of a finite-horizon disturbance-history-affine LQR problem. They show that as the horizon length of the latter increases, the gain corresponding to the most recent disturbance converges to the optimal infinite-horizon LQR gain.
Disturbance-affine policies are shown to be optimal for one-dimensional constrained robust optimization \cite{bertsimasOptimalityAffinePolicies2010}.
Classic \cite{dermanNoteMemorylessRules1966} and modern \cite{feinbergSufficiencyMarkovPolicies2022} results in Markov Decision Processes (MDPs) show that a history-dependent randomized policy can be replaced by a \textit{randomized} Markov policy. The existence of such randomized policies has been shown \cite{liuOptimalCovarianceSteering2025} and used \cite{balciExactSDPFormulation2022} for covariance steering specifically. However, these results do not provide a deterministic Markov policy. A deterministic policy is desirable in domains that emphasize reproducibility, explainability, and safety, such as aerospace.

This work addresses the existing gap in covariance steering theory: the optimality of deterministic Markovian policies for \textit{chance-constrained} covariance steering.
We show that for such problems where the constraints obey a mild assumption satisfied by most practical safety constraints, it is possible to synthesize a deterministic Markovian policy that performs equivalently to the optimal state-history-affine policy.
These constraints, which we refer to as being covariance-monotonic, approach the feasibility boundary as the covariance of the uncertain variable increases; a precise definition is given in \cref{def:covariance-monotonic}. 
The two policies (history-affine and Markovian) produce the same control actions almost surely and, consequently, the same state trajectories.
Moreover, in the special case in which the initial-state and process-noise covariances are strictly positive definite, we show that the gains corresponding to the state at previous time steps are zero at the optimum,
and the optimal solution is Markovian, even under (covariance-monotonic) chance constraints.

The results have both theoretical significance and practical benefits. 
On the theoretical side, our results extend the optimality of the Markovian structure for covariance steering \cite{liuOptimalCovarianceSteering2025} to chance-constrained problems.
The results support the use of history-feedback parameterization for solving chance-constrained covariance steering while retaining the desirable Markovian structure. 
We uncover that these policies are lifted from the (more restrictive) Markovian policy space, and the optimal solution either lies in, or admits an equivalent projection onto, the Markovian policy space.

Turning to practical benefits, the methods of \cite{okamotoOptimalCovarianceControl2018,balciConstrainedMinimumVariance2024} return an optimal state-history-affine policy, which requires quadratic storage. 
The synthesized Markovian policy enables linear storage, and previous gains and states can be discarded. 
Because the proposed post-processing occurs after optimization, it does not affect the convex offline formulation of the chance-constrained problem.

We generalize the results to an output-feedback (Kalman-filter-in-the-loop) scenario and extend them to a convex upper-bounding surrogate of value-at-risk costs. Both require nontrivial analysis and are important for practical applications.
Numerical investigations on examples from the covariance steering literature support our theoretical findings.

\textbf{Summary of Contributions:}
\begin{itemize}
	\item We show that for chance-constrained covariance steering problems with a mild assumption on the chance constraints, the optimal state-history feedback policy can be replaced by a deterministic Markovian state feedback policy that produces the same state and control trajectories almost surely.
	\item We show that when the initial state and disturbance covariances are positive definite, the optimal state-history feedback policy is readily Markovian without post-processing.
	\item We extend the results to output feedback and value-at-risk costs.
\end{itemize}

\textit{Notation:}
$\R$, $\R^n$, $\R^{n\times m}$, and $\Symmetric_+^n$ denote the set of real numbers, $n$-dimensional real vectors, $n \times m$ real matrices, and symmetric positive semidefinite matrices of size $n \times n$, respectively.
$\mathrm{Tr}(\cdot)$, $\E{\cdot}$, $\cov{\cdot}$, $\P{\cdot}$, and $(\cdot)^\dagger$ denote the trace of a matrix, expectation, covariance, probability, and the Moore-Penrose pseudoinverse, respectively.
$X \succeq 0$ denotes that $X$ is positive semidefinite and $X \succ 0$ denotes that $X$ is strictly positive definite.
At times, we use the notation $x_{i:j}$ to denote $(x_i, x_{i+1}, \dots, x_j)$.

\section{Problem Description} \label{sec:description}
Consider the synthesis of a policy $\pi$ for the optimal control of a discrete-time linear system with additive stochastic disturbances, subject to constraints on the state and control.
\begin{equation} \label{eq:problem}
	\begin{split}
	\min_\pi \quad &J = \sum_{k=0}^{N} \E{x_k^\top Q_k x_k} + \sum_{k=0}^{N-1} \E{u_k^\top R_k u_k}, \\
	\text{subject to} \quad &x_{k+1} = A_k x_k + B_k u_k + G_k w_k, \\
	&g_x(\E{x_k}, \cov{x_k}) \leq 0, \quad k = 1, \dots, N, \\
	&g_u(\E{u_k}, \cov{u_k}) \leq 0, \quad k = 0, \dots, N-1
	\end{split}
\end{equation}
where \( x_k \in \mathbb{R}^n \) is the state, \( u_k \in \mathbb{R}^m \) is the control input, and \( w_k \in \mathbb{R}^l \) is zero-mean white Gaussian noise with unit covariance. We assume that the initial state is normally distributed with known mean and covariance, $x_0 \sim \mathcal{N}(\mu_0, P_0)$, where $P_0 \succeq 0$, and that $x_0$ is independent of $\{w_k\}_{k=0}^{N-1}$.
The weighting matrices satisfy $Q_k \succeq 0$ and $R_k \succ 0$ at every time step.

The constraints $g_x$ and $g_u$ in \cref{eq:problem} are assumed to be covariance-monotonic, as defined below:
\begin{definition}[Covariance-monotonic constraints] \label{def:covariance-monotonic}
	For a constraint $g(\mu, P) \leq 0$ with $\mu \in \R^n$ and $P \in \Symmetric_+^n$, 
\begin{equation}
	P_1 \preceq P_2 \implies g(\mu, P_1) \leq g(\mu, P_2) 
\end{equation}
\end{definition}
The physical intuition behind this definition is that an increase in the covariance of the state or control pushes the solution toward infeasibility.
This definition includes deterministic reformulations (in terms of the moments) of affine \cite{okamotoOptimalCovarianceControl2018} and Euclidean-norm \cite{oguriChanceConstrainedControlSafe2024a} chance constraints on Gaussian random variables, and covariance inequalities for covariance steering \cite{okamotoOptimalCovarianceControl2018}. It also includes mean equality constraints.
Affine and Euclidean-norm chance constraints can be reformulated as 
\begin{align}
	&\P{a^\top z \leq b} \geq 1-\epsilon \ \Leftrightarrow \ a^\top \mu +  \Phi_{\normal}^{-1}(1-\epsilon) \sqrt{a^\top P a} \leq b, \label{eq:affine-cc}\\
	&\P{\|z\|_2 \leq b} \geq 1-\epsilon \ \Leftarrow \ \|\mu\|_2 + \alpha_{l, \epsilon} \sqrt{\lambda_{\max}(P)} \leq b \label{eq:cc-norm},
\end{align}
for $ z \in \R^{l} $ with $ z \sim \normal(\mu, P) $,
where $\Phi_{\normal}^{-1}$ is the inverse cumulative distribution function of a standard normal random variable. Here, $\alpha_{l, \epsilon}:=\sqrt{\Phi_{\chi_l^2}^{-1}(1-\epsilon)}$, where $\Phi_{\chi_l^2}^{-1}$ denotes the inverse cumulative distribution function of a chi-square random variable with $l$ degrees of freedom. Affine chance constraints of \cref{eq:affine-cc} are covariance-monotonic when $\epsilon \in (0, 0.5]$.

\subsection{Affine Controllers}
Our goal is to construct a Markovian controller, parameterized as
\begin{equation} \label{eq:markov-controller}
	\pi_k^M(x_k) = v_k + H_k (x_k - \E{x_k}).
\end{equation}
For some special cases of \cref{eq:problem}, such as unconstrained covariance steering, such controllers are known to be optimal \cite{liuOptimalCovarianceSteering2025}. $v_k$ is the feedforward term and $H_k$ is the feedback gain term, both to be synthesized. $\E{x_k}$ is the mean of the state at time step $k$, which will also be known via optimization.

Also consider a state-history-affine controller:
\begin{equation*}
		u_k = \pi_k^C(x_0, x_1, \dots, x_k).
\end{equation*}
Without loss of generality, this controller is parameterized as 
\begin{equation} \label{eq:history_controller}
	u_k = v_k + \sum_{i=0}^{k} K_{k,i} (x_i - \E{x_i}). 
\end{equation}
Compared with the Markovian case, a larger number of variables, \(\{K_{k,i}:0\le i\le k\le N-1\}\), must be synthesized. Nevertheless, for chance-constrained problems, this policy class admits convex reparameterizations \cite{skafDesignAffineControllers2010,okamotoOptimalCovarianceControl2018,andersonSystemLevelSynthesis2019}.

\subsection{Deterministic State-History-Affine Formulation}
Under the state-history-affine parameterization in \cref{eq:history_controller}, \cref{eq:problem} becomes a finite-dimensional optimization over $\bm v$ and a block-lower-triangular gain $K$. Define
$\bm{x}:=[x_0^\top,\dots,x_N^\top]^\top$, 
$\bm{u}:=[u_0^\top,\dots,u_{N-1}^\top]^\top$,
$\bm{v}:=[v_0^\top,\dots,v_{N-1}^\top]^\top$.
The control policy \cref{eq:history_controller} is then written in the batch vector form as
\begin{equation*}
	\bm{u} = \bm{v} + K (\bm{x} - \E{\bm{x}}), 
\end{equation*}
where $K$ is block lower triangular with blocks in $\R^{m\times n}$:
\begin{equation} \label{eq:K-definition}
    K = \begin{bmatrix}
    K_{0,0} & 0 & \dots & 0 & 0\\
    K_{1,0} & K_{1,1} & \dots & \vdots &  \vdots\\
    \vdots & \vdots & \ddots & \vdots & \vdots \\
    K_{N-1,0} & K_{N-1,1} & \dots & K_{N-1,N-1} & 0
    \end{bmatrix}.
\end{equation}
With the lifted matrices $A,B,G$ defined in \cref{sec:block-matrix-definitions}, the dynamics are expressed in the batch form \cite{skafDesignAffineControllers2010}
\begin{equation*}
	\bm{x} = A x_0 + B \bm{u} + G \bm{w}.
\end{equation*}
Because $BK$ is strictly block lower triangular, $I-BK$ is invertible. Using the uncontrolled state covariance $S$ defined in \cref{eq:S-definition}, we have \cite{okamotoOptimalCovarianceControl2018}
\begin{subequations} \label{eq:batch-moments}
\begin{align}
	\bm\mu& := \E{\bm x }=A\mu_0+B\bm v,\\
	\bm{v} &= \E{\bm u}, \\
	P_X& :=\cov{\bm x} = (I-BK)^{-1}S(I-BK)^{-\top}, \label{eq:P_X-definition}\\
	P_U& := \cov{\bm u} =KP_XK^\top,\\
	P_{UX}&:=\E{(\bm u-\bm v)(\bm x-\bm\mu)^\top} = KP_X.
\end{align}
\end{subequations}
Define $Q:=\mathrm{blkdiag}(Q_0,\dots,Q_N)$ and $R:=\mathrm{blkdiag}(R_0,\allowbreak \dots, \allowbreak R_{N-1})$. The cost becomes
\begin{equation*}
	J(\bm v,K)=\bm\mu^\top Q\bm\mu+\bm v^\top R\bm v
	+\mathrm{Tr}(QP_X)+\mathrm{Tr}(RP_U).
\end{equation*}
Let $E_k^x$ and $E_k^u$ select $x_k$ and $u_k$ from the corresponding batch vectors. The state and control at each time step are Gaussian due to the linearity of the dynamics and the control policy. Their statistical moments are
\begin{equation} \label{eq:moments}
	\begin{split}
	\mu_k &:= \E{x_k} = E_k^x\bm\mu, \\
	v_k &= \E{u_k} = E_k^u\bm v, \\
	P_{x,k} &:= \cov{x_k} =E_k^xP_X{E_k^x}^\top,\\
	P_{u,k} &:= \cov{u_k} =E_k^uP_U{E_k^u}^\top, \\
	P_{ux,k} &:= \cov{u_k, x_k} =E_k^uP_{UX}{E_k^x}^\top.
	\end{split}
\end{equation}

Thus, \cref{eq:problem} can be reformulated as 
\begin{equation} \label{eq:parameterized-problem}
	\begin{split}
		\min_{\bm{v}, K} \quad & J(\bm{v}, K) \\
		\text{subject to} \quad & g_x(\mu_k, P_{x,k}) \leq 0, \quad k = 1, \dots, N, \\
		& g_u(v_k, P_{u,k}) \leq 0, \quad k = 0, \dots, N-1, \\
		& K \text{ block lower triangular}.
	\end{split}
\end{equation}
This parameterization is exact but generally nonconvex in $K$. Several convex reformulations are available, such as the Youla parameterization \cite{skafDesignAffineControllers2010,okamotoOptimalCovarianceControl2018}, disturbance feedback \cite{goulartOptimizationStateFeedback2006a,balciConstrainedMinimumVariance2024}, and system level synthesis \cite{andersonSystemLevelSynthesis2019,khalilDistributedLocalizedCovariance2025}. These are briefly summarized in \cref{sec:convex-formulation}.
Solving the resulting convex optimization problem provides an optimal solution $\pi^C$ of the state-history-affine formulation.
The analysis in \cref{sec:analysis} is agnostic to the choice of convex reformulation, as long as an optimal solution is obtained.

\subsection{Convex Formulations} \label{sec:convex-formulation}
Here, we summarize three convex reformulations of \cref{eq:batch-moments} that make \cref{eq:parameterized-problem} convex in the decision variables.
The lifted matrices used below ($A$, $B$, $G$, $Z_A$, $Z_B$, $F$, $\Sigma_w$, and $S$) are defined in \cref{sec:block-matrix-definitions}.
\subsubsection{Youla Parameterization \cite{vanhessemConicReformulationModel2002,skafDesignAffineControllers2010,okamotoOptimalCovarianceControl2018}}
This parameterization \cite{youlaModernWienerHopfDesign1976} defines the change of variables
\begin{subequations}\label{eq:K-L}
\begin{align}
	L &= K(I-BK)^{-1}, \label{eq:K2L}\\
	K &= L(I+BL)^{-1}. \label{eq:L2K}
\end{align}
\end{subequations}
This allows the covariance matrices in \cref{eq:batch-moments} to be expressed as
\begin{subequations}\label{eq:batch-moments-Youla}
\begin{align}
	P_X &= (I+BL)S(I+BL)^\top,\\
	P_U &= LSL^\top,\\
	P_{UX} &= LS(I+BL)^\top.
\end{align}
\end{subequations}
The variable $L \in \R^{mN \times n(N+1)}$ is block lower triangular with blocks in $\R^{m \times n}$.

\subsubsection{Disturbance Feedback \cite{goulartOptimizationStateFeedback2006a,bakolasOptimalCovarianceControl2016}}
Define the augmented disturbance vector
\begin{equation}
	\bm d:=
	\begin{bmatrix}
		(x_0-\mu_0)^\top & (G_0w_0)^\top & \cdots & (G_{N-1}w_{N-1})^\top
	\end{bmatrix}^\top.
\end{equation}
Disturbance feedback instead finds a causal gain $K_w$ such that
$\bm u=\bm v+K_w\bm d$ \cite{balciConstrainedMinimumVariance2024}.
Here, $K_w\in\R^{mN\times n(N+1)}$ is block lower triangular with blocks in $\R^{m\times n}$. Since $\cov{\bm d}=\Sigma_w$, the closed-loop covariances are
\begin{subequations}\label{eq:batch-moments-disturbance-feedback}
\begin{align}
	P_X &= (F+BK_w)\Sigma_w(F+BK_w)^\top,\\
	P_U &= K_w\Sigma_wK_w^\top,\\
	P_{UX} &= K_w\Sigma_w(F+BK_w)^\top.
\end{align}
\end{subequations}
The disturbance-feedback and Youla gains satisfy
\begin{equation}\label{eq:Kw-L-relation}
	K_w=K(I-BK)^{-1}F=LF.
\end{equation}
Consequently, the state-history gain is recovered by setting $L=K_wF^{-1}$ in \cref{eq:L2K}.

\subsubsection{System Level Parameterization \cite{andersonSystemLevelSynthesis2019}}
The system level synthesis (SLS) parameterization directly optimizes the causal closed-loop responses
\begin{equation}
	\bm x-\bm\mu=\Phi_x\bm d,
	\qquad
	\bm u-\bm v=\Phi_u\bm d
\end{equation}
subject to the affine subspace constraint \cite{andersonSystemLevelSynthesis2019}
\begin{equation}\label{eq:sls-achievability}
	(I-Z_A)\Phi_x-Z_B\Phi_u=I.
\end{equation}
Here, $\Phi_x\in\R^{n(N+1)\times n(N+1)}$ and $\Phi_u\in\R^{mN\times n(N+1)}$ are block lower triangular, with blocks in $\R^{n\times n}$ and $\R^{m\times n}$, respectively. Their covariance expressions are
\begin{subequations}\label{eq:batch-moments-sls}
\begin{align}
	P_X &= \Phi_x\Sigma_w\Phi_x^\top,\\
	P_U &= \Phi_u\Sigma_w\Phi_u^\top,\\
	P_{UX} &= \Phi_u\Sigma_w\Phi_x^\top.
\end{align}
\end{subequations}
The state-history gain is recovered as $K=\Phi_u\Phi_x^{-1}$. 

The three parameterizations are related by an affine transformation of the decision variables:
\begin{equation}
	\Phi_x=(I+BL)F=F+BK_w,
	\qquad
	\Phi_u=LF=K_w.
\end{equation}

\section{Markovian Policy Construction and Equivalence}
\label{sec:analysis}

The convex reformulations in \cref{sec:description} enable the synthesis of a globally optimal policy $\pi^C$ for \cref{eq:problem} (within the state-history-affine policy class). This section provides a method to construct a Markovian policy $\pi^M$ from $\pi^C$ and shows the equivalence of the two policies.

\subsection{Markovian Policy Construction}

Given a state-history-affine policy, we construct a Markovian policy using \cref{alg:markov-construction}. \cref{thm:markov} shows that the optimal history-feedback policy (which is the input to \cref{alg:markov-construction}) and the Markovian policy (the output) produce identical control actions almost surely.
\begin{algorithm}
	\caption{Markovian Policy Construction}\label{alg:markov}
	\begin{algorithmic}[1]
	\State \textbf{Input:} State-history-affine policy $\pi^C$, consisting of $\{v_k\}_{k=0}^{N-1}$, $\{\mu_k\}_{k=0}^{N-1}$, and $\{K_{k,i}, 0 \leq i \leq k \leq N-1\}$.
	\State Compute the state and control covariances $\{P_{x,k}, P_{u,k}, P_{ux,k}\}_{k=0}^{N-1}$ using \cref{eq:batch-moments,eq:moments}.
	\State Compute the Markovian gains $\{H_k\}_{k=0}^{N-1}$ using
	\begin{equation} \label{eq:H-e-def}
		H_k := P_{ux,k} P_{x,k}^{\dagger}, \quad k = 0, \dots, N-1.
	\end{equation}
	\State \textbf{Output:} Markovian policy $\pi^M$, consisting of $\{v_k\}_{k=0}^{N-1}$, $\{\mu_k\}_{k=0}^{N-1}$, and $\{H_k\}_{k=0}^{N-1}$.
	\end{algorithmic}
	\label{alg:markov-construction}
\end{algorithm}

Consider $\pi^C$ and $\pi^M$, with the latter constructed from the former using \cref{alg:markov-construction}.
Define the stochastic error term $e_k$:
\begin{equation} \label{eq:e-def}
	e_k = \pi_k^C(x_0, \dots, x_k) - \pi_k^M(x_k), \quad k = 0, \dots, N-1.
\end{equation}
From \cref{eq:markov-controller,eq:history_controller}, 
\begin{align}
	e_k &= [v_k + \sum_{i=0}^{k} K_{k,i} (x_i - \mu_i)] - [v_k + H_k (x_k - \mu_k)] \nonumber \\
	&= \delta u_k - H_k \delta x_k \label{eq:e_k-with-delta}
\end{align}
where 
\begin{equation}
	\delta x_k = x_k - \mu_k, \quad \delta u_k = u_k - v_k
\end{equation}
are the deviations of the state and control from their respective means under the policy $\pi^C$. Note that $\E{e_k} = 0$ because $\E{\delta u_k} = 0$ and $\E{\delta x_k} = 0$.

The error term $e_k$ satisfies the following properties, which are used in the proof of \cref{thm:markov}.
\begin{lemma} \label{lem:ek-properties}
	Define $H_k$ and $e_k$ as in \cref{eq:H-e-def,eq:e-def}. Then, 
	\begin{enumerate}[label=(\alph*)]
		\item $\E{e_k \delta x_k^\top} = 0$,
		\item $P_{e,k} = P_{u,k} - H_k P_{ux,k}^\top = P_{u,k} - P_{ux,k} P_{x,k}^{\dagger} P_{ux,k}^\top.$
	\end{enumerate}
\end{lemma}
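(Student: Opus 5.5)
The proof is a direct computation from the definition $e_k = \delta u_k - H_k \delta x_k$ in \cref{eq:e_k-with-delta}, using the moment identities \cref{eq:moments}: $\E{\delta x_k \delta x_k^\top} = P_{x,k}$, $\E{\delta u_k \delta u_k^\top} = P_{u,k}$, and $\E{\delta u_k \delta x_k^\top} = P_{ux,k}$. The only non-routine ingredient is a pseudoinverse identity. We must show $P_{ux,k} P_{x,k}^\dagger P_{x,k} = P_{ux,k}$, even when $P_{x,k}$ is singular, which can happen since $P_0 \succeq 0$ and $G_k$ may be rank deficient. This is the step I expect to need care.

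For (a), expand
\[
\E{e_k \delta x_k^\top} = P_{ux,k} - H_k P_{x,k} = P_{ux,k}\bigl(I - P_{x,k}^\dagger P_{x,k}\bigr).
\]
The matrix $I - P_{x,k}^\dagger P_{x,k}$ is the orthogonal projector onto $\ker P_{x,k}$.

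To show the product vanishes, take any $z \in \ker P_{x,k}$. Then $\E{(z^\top \delta x_k)^2} = z^\top P_{x,k} z = 0$, so $z^\top \delta x_k = 0$ almost surely. Hence $P_{ux,k} z = \E{\delta u_k \, \delta x_k^\top z} = 0$.

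Alternatively, apply the Schur-complement (range-inclusion) fact to the PSD joint covariance of $(\delta u_k, \delta x_k)$. Positive semidefiniteness of $\begin{bmatrix} P_{u,k} & P_{ux,k} \\ P_{ux,k}^\top & P_{x,k}\end{bmatrix}$ implies $\mathrm{range}(P_{ux,k}^\top) \subseteq \mathrm{range}(P_{x,k})$. Either route gives $P_{ux,k} P_{x,k}^\dagger P_{x,k} = P_{ux,k}$, and therefore (a).

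For (b), since $\E{e_k}=0$ we have $P_{e,k} = \E{e_k e_k^\top}$. Write
\[
\E{e_k e_k^\top} = \E{e_k \delta u_k^\top} - \E{e_k \delta x_k^\top} H_k^\top .
\]
By (a) the second term vanishes. The first term equals $P_{u,k} - H_k P_{ux,k}^\top$. Substituting $H_k = P_{ux,k} P_{x,k}^\dagger$ gives the second expression in (b). Symmetry of $P_{x,k}^\dagger$ is not even needed for this substitution, only the definition \cref{eq:H-e-def}.
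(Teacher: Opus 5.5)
Your proposal is correct and follows the paper's proof of (a) essentially verbatim: the paper also shows $\ker P_{x,k}\subseteq\ker P_{ux,k}$ via $z^\top\delta x_k=0$ a.s.\ and concludes $P_{ux,k}P_{x,k}^\dagger P_{x,k}=P_{ux,k}$. For (b) the difference is only bookkeeping. The paper expands all four terms of the covariance of $\delta u_k-H_k\delta x_k$ and collapses them with $P_{x,k}^\dagger P_{x,k}P_{x,k}^\dagger=P_{x,k}^\dagger$, whereas you reuse (a) to eliminate the cross term, which lands directly on $P_{u,k}-H_kP_{ux,k}^\top$ and does not need symmetry of $P_{x,k}^\dagger$.
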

\begin{proof}

	[Proof of (a)]
	\begin{align*}
		\E{e_k \delta x_k^\top}
		&= \E{\delta u_k \delta x_k^\top} - H_k \E{\delta x_k \delta x_k^\top} \\
		&= P_{ux,k} - P_{ux,k} P_{x,k}^{\dagger} P_{x,k} \\
		&= 0
	\end{align*}
	Most of the proof for the last equality follows in the proof of \cite[Proposition 6]{liuOptimalCovarianceSteering2025}, but we include it here for completeness. Let us drop the $k$ subscript for brevity. 
	Let $z \in \ker(P_x)$. Then, $\delta x^\top z = 0$ a.s., which implies $P_{ux} z = \E{\delta u \delta x^\top} z = \E{\delta u (\delta x^\top z)} = 0$. Thus, $\ker(P_x) \subseteq \ker(P_{ux})$. This is equivalent to $P_{ux} = P_{ux} P_x^{\dagger} P_x$.

	[Proof of (b)] Take the covariance of both sides of \cref{eq:e_k-with-delta}, substituting the definition of $H_k$.
	Remove the $k$ subscript for brevity. 
		\begin{align}
		P_{e}
		&= \E{\delta u \delta u^\top}
		- \E{\delta u \delta x^\top} P_x^{\dagger} P_{ux}^\top \nonumber \\
		&\quad - P_{ux} P_x^{\dagger} \E{\delta x \delta u^\top} 
		+ P_{ux} P_x^{\dagger} \E{\delta x \delta x^\top}
		P_x^{\dagger} P_{ux}^\top \nonumber \\
		&= P_u - P_{ux} P_x^{\dagger} P_{ux}^\top \label{eq:P_e-equality}
		\end{align}
	where the property $P_x^{\dagger} P_x P_x^{\dagger} = P_x^{\dagger}$ is used in the last equality.
\end{proof}


To isolate the effect of $e_k$ at time $k$, define the policy $\pi'(k)$ that uses $\pi^C$ up to time step $k-1$ and the corresponding $\pi^M$ from time step $k$ onward:
\begin{equation} \label{eq:u_t-prime}
			u_t'(x_{0:t}; k) = \begin{cases}
				\pi_t^C(x_{0:t}), & t<k, \\
				\pi_t^M(x_t), & t\geq k.
			\end{cases}
\end{equation}
Next, in \cref{lem:covariance-diff}, we use this policy to assert that when later controls are computed using the Markovian policy, the effect of $e_k$ on the state and control covariances at later time steps can be expressed in terms of its covariance $P_{e,k}$.

\begin{lemma} \label{lem:covariance-diff}
	Consider the policy $\pi^C$ of the form \cref{eq:history_controller} and $\pi'(k)$ of the form \cref{eq:u_t-prime}.
	Let $\delta x_t' = x_t' - \mu_t$ and $\delta u_t' = u_t' - v_t$ denote the state and control deviations under $\pi'$. Define $e_k$ as in \cref{eq:e-def} and its covariance as $P_{e,k} := \E{e_k e_k^\top}$. Finally, assume that
	\begin{equation}
		\delta u_t = H_t \delta x_t, \quad \text{a.s.}\quad t=k+1, \dots, N-1,
	\end{equation}
	where $\delta x_t$ and $\delta u_t$ are defined above.

	Then, 
	\begin{align}
		\E{x_t'} &= \E{x_t}, &&t=k,\dots,N\\
		\E{u_t'} &= \E{u_t}, &&t = k,\dots,N-1 \\
		P_{x,t} &= P_{x,t}' + T_{t,k}P_{e,k}T_{t,k}^\top, &&t=k+1,\dots,N \label{eq:P_xt-diff}\\
		P_{u,k} &= P_{u,k}' + P_{e,k}, \label{eq:P_uk-diff}\\
		P_{u,t} &= P_{u,t}' + H_t T_{t,k} P_{e,k} T_{t,k}^\top H_t^\top, &&t=k+1,\dots,N-1\label{eq:P_ut-diff}
	\end{align}
	where 
	\begin{equation*}
		T_{k+1,k} = B_k, \qquad T_{t+1,k} = (A_t + B_t H_t) T_{t,k}.
	\end{equation*}
\end{lemma}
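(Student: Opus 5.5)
We will couple the two closed loops on the same probability space, driven by the same $x_0$ and the same noise sequence $\{w_t\}$. The key object is the difference between the two state trajectories. Since the policies agree for $t<k$, we have $x_t'=x_t$ for $t\le k$.

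At time $k$, $u_k' = \pi_k^M(x_k) = u_k - e_k$ by \cref{eq:e-def}. For $t\ge k+1$, both policies produce the same mean $v_t$ and act through the same gain $H_t$. This uses the assumption $\delta u_t = H_t\delta x_t$ a.s. for the history policy, while $\delta u_t' = H_t\delta x_t'$ holds by construction of $\pi^M$. Define $d_t := x_t - x_t'$. Then $d_{k+1} = B_k e_k = T_{k+1,k}e_k$. Subtracting the two dynamics gives $d_{t+1} = A_t d_t + B_t(u_t-u_t') = (A_t+B_tH_t)d_t$ a.s. Induction therefore yields
\begin{equation*}
	x_t - x_t' = T_{t,k}e_k \ \ (t\ge k+1), \qquad u_t-u_t' = H_tT_{t,k}e_k \ \ (t\ge k+1), \qquad u_k-u_k'=e_k .
\end{equation*}
Since $\E{e_k}=0$, taking expectations gives the equality of means for states and controls from $t=k$ onward. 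The case $t=k$ is trivial, since $x_k'=x_k$.

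For the covariances, we write $\delta x_t = \delta x_t' + T_{t,k}e_k$, and similarly for the controls. The cross terms will vanish if $\E{e_k\,\delta x_t'^\top}=0$ for $t\ge k+1$, together with $\E{e_k\,\delta u_k'^\top}=0$. This orthogonality is the main step.

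For the control term, $\delta u_k' = H_k\delta x_k$, so $\E{e_k\delta u_k'^\top} = \E{e_k\delta x_k^\top}H_k^\top = 0$ by \cref{lem:ek-properties}(a). This gives \cref{eq:P_uk-diff}.

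For the states, note that $\delta x_t'$ for $t\ge k+1$ is a linear function of $\delta x_k$ and $w_k,\dots,w_{t-1}$. Explicitly, $\delta x_t' = \Psi_{t,k}(A_k+B_kH_k)\delta x_k + \sum_{s=k}^{t-1}\Psi_{t,s+1}G_sw_s$, where $\Psi$ is the transition matrix of $A+BH$. Now $e_k$ is a function of $x_{0:k}$, hence of $x_0$ and $w_0,\dots,w_{k-1}$, so it is independent of $w_s$ for $s\ge k$. Both $e_k$ and the noises have zero mean, so those cross terms vanish. The remaining term is $\E{e_k\delta x_k^\top}(\cdot)^\top=0$, again by \cref{lem:ek-properties}(a).

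Hence $P_{x,t} = P_{x,t}' + T_{t,k}P_{e,k}T_{t,k}^\top$. Combining this with $\delta u_t = \delta u_t' + H_tT_{t,k}e_k$ and the same orthogonality gives \cref{eq:P_ut-diff}.

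The only delicate point is verifying this orthogonality. In particular, $e_k$ is merely uncorrelated with $\delta x_k$, not independent of it. This is why it must be propagated linearly through $\delta x_t'$ rather than by an independence argument. Linearity of all maps under the affine policies makes this work, and Gaussianity is not needed for this step.
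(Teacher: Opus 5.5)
Your proof is correct and follows the paper's argument essentially step by step. You propagate $\delta x_t-\delta x_t'=T_{t,k}e_k$ through the closed loop $A_t+B_tH_t$ using the a.s.\ hypothesis $\delta u_t=H_t\delta x_t$, then remove the cross terms via \cref{lem:ek-properties}(a) together with the independence of $e_k$ from $w_k,\dots,w_{t-1}$.

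Two cosmetic points only. First, with the standard transition-matrix convention implied by your noise sum, your explicit formula for $\delta x_t'$ counts the factor $A_k+B_kH_k$ twice; this is harmless, since only linearity in $(\delta x_k,w_k,\dots,w_{t-1})$ is used. Second, in this Gaussian setting $e_k$ and $\delta x_k$ are in fact independent, being jointly Gaussian and uncorrelated, although, as you say, uncorrelatedness is all the argument needs.
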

\begin{proof}
	The equivalence of the means is straightforward, since the policies share $v$ and $\mu$.
	Because $\pi_t^C(x_{0:t}) = \pi_t'(x_{0:t})$ for $t < k$, we have $\delta x_k' = \delta x_k$.
	Since $u_k' = u_k^M$, 
	\begin{equation}
		\delta u_k - \delta u_k' = [\sum_{i=0}^{k} K_{k,i} (x_i - \mu_i)] - [H_k (x_k - \mu_k)] = e_k.
	\end{equation}
	Rearranging,
	\begin{equation} \label{eq:u_k-diff}
		\delta u_k = \delta u_k' + e_k.
	\end{equation}
	Then, 
	\begin{align*}
		\delta x_{k+1} &= A_k \delta x_k + B_k \delta u_k + G_k w_k \\ 
		& = (A_k + B_k H_k) \delta x_k + B_k e_k + G_k w_k \\
		\delta x_{k+1}' &= A_k \delta x_k' + B_k \delta u_k' + G_k w_k \\
		& = (A_k + B_k H_k) \delta x_k' + G_k w_k \\
		& \implies \delta x_{k+1} = \delta x_{k+1}' + B_k e_k
	\end{align*}
	Repeating this argument for $t=k+1,\dots,N-1$ gives
	\begin{equation} \label{eq:x_t-diff}
		\delta x_t = \delta x_t' + T_{t,k} e_k, \quad t = k+1, \dots, N.
	\end{equation}
	For $t=k+1,\dots,N-1$, 
	\begin{align} \label{eq:u_t-diff}
		\delta u_t = H_t \delta x_t 
		= H_t (\delta x_t' + T_{t,k} e_k)
		= \delta u_t' + H_t T_{t,k} e_k. 
	\end{align}
	Next, use that $\delta x_t'$ is an affine function of $\delta x_k$ and $\{w_i\}_{i=k}^{t-1}$, and 
	\begin{equation}
		\cov{e_k, \delta x_k} = 0, \quad \cov{e_k, w_i} = 0, \ i=k,\dots,N-1,
	\end{equation}
	where the first equality is from \cref{lem:ek-properties} and the second is from the independence of $e_k$ and $w_i$.
	This gives
	\begin{equation} \label{eq:x_t-e_k-indep}
		\cov{\delta x_t', e_k} = 0, \quad t=k,\dots,N.
	\end{equation}
	Combining \cref{eq:x_t-diff} and \cref{eq:x_t-e_k-indep} gives 
	\cref{eq:P_xt-diff}.
	Since 
	\begin{align}
		\E{\delta u_t' (e_k^\top)} = \E{H_t \delta x_t' (e_k^\top)} = H_t \E{\delta x_t' e_k^\top} = 0,
	\end{align}
	\cref{eq:u_k-diff,eq:u_t-diff} and \cref{eq:x_t-e_k-indep} give
	\cref{eq:P_uk-diff,eq:P_ut-diff}.
\end{proof}

\cref{lem:covariance-diff} provides the equations for the covariances of the state and control under $\pi^C$ and $\pi'(k)$, which are used in \cref{lem:residual-dominance} to show that $\pi'(k)$ is better than or equal to $\pi^C$ in terms of cost and feasibility.
\begin{lemma} \label{lem:residual-dominance}
	Define the control policies $\pi^C$ and $\pi'(k)$ as in \cref{lem:covariance-diff}. Let $J$ and $J'(k)$ be the costs of $\pi^C$ and $\pi'(k)$, respectively. Then, 
	\begin{enumerate}[label=(\alph*)]
		\item $J - J'(k) \geq 0.$
		\item If $\pi^C$ is feasible for \cref{eq:problem}, then $\pi'(k)$ is also feasible.
	\end{enumerate}
\end{lemma}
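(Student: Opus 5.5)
We prove both parts directly from the covariance identities of \cref{lem:covariance-diff}, working under the same hypotheses as that lemma. In particular, we assume $\delta u_t = H_t\delta x_t$ a.s.\ for $t>k$, so that \cref{eq:P_xt-diff,eq:P_uk-diff,eq:P_ut-diff} are available.

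The first observation is that $\pi^C$ and $\pi'(k)$ agree up to time $k-1$. Hence the states $x_0,\dots,x_k$ and controls $u_0,\dots,u_{k-1}$ have identical joint distributions under the two policies, and in particular identical means and covariances. Moreover, \cref{lem:covariance-diff} gives equality of all means from time $k$ onward.

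For part (a), write the cost as a sum of a mean part and a covariance part:
\begin{equation*}
J=\sum_t\bigl(\mu_t^\top Q_t\mu_t+\mathrm{Tr}(Q_tP_{x,t})\bigr)+\sum_t\bigl(v_t^\top R_tv_t+\mathrm{Tr}(R_tP_{u,t})\bigr).
\end{equation*}
The mean parts coincide, and so do the covariance terms for $t\le k$ in the state sum and $t<k$ in the control sum. Substituting \cref{eq:P_xt-diff,eq:P_uk-diff,eq:P_ut-diff}, the difference reduces to
\begin{equation*}
J-J'(k)=\mathrm{Tr}(R_kP_{e,k})+\sum_{t=k+1}^{N}\mathrm{Tr}(Q_tT_{t,k}P_{e,k}T_{t,k}^\top)+\sum_{t=k+1}^{N-1}\mathrm{Tr}(R_tH_tT_{t,k}P_{e,k}T_{t,k}^\top H_t^\top).
\end{equation*}
Each term is the trace of a product of a positive semidefinite matrix ($Q_t\succeq0$ or $R_t\succ0$) with a congruence transform of $P_{e,k}\succeq0$ (a covariance). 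Such a trace is nonnegative, so $J-J'(k)\ge0$.

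For part (b), the means at every time step are unchanged. The same identities show that
\begin{equation*}
P'_{x,t}=P_{x,t}-T_{t,k}P_{e,k}T_{t,k}^\top\preceq P_{x,t}\quad(t>k),\qquad P'_{u,t}\preceq P_{u,t}\quad(t\ge k),
\end{equation*}
with equality for earlier times. Covariance-monotonicity (\cref{def:covariance-monotonic}) then gives
\begin{equation*}
g_x(\mu_t,P'_{x,t})\le g_x(\mu_t,P_{x,t})\le0,\qquad g_u(v_t,P'_{u,t})\le g_u(v_t,P_{u,t})\le0,
\end{equation*}
so $\pi'(k)$ is feasible whenever $\pi^C$ is.

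The step needing the most care is bookkeeping rather than analysis. We must confirm that the cost and constraint terms at times $\le k$ for states and $<k$ for controls are truly unchanged. We must also confirm that the relevant distributions remain Gaussian, so that the moment-based constraint functions $g_x,g_u$ faithfully represent the chance constraints for $\pi'(k)$; this holds because $\pi'(k)$ is still affine in the states. Finally, the hypothesis $\delta u_t=H_t\delta x_t$ a.s.\ for $t>k$, inherited from \cref{lem:covariance-diff}, must be carried along explicitly. In the proof of \cref{thm:markov} it is presumably supplied by a backward induction on $k$ that combines these inequalities with optimality of $\pi^C$ to force $P_{e,k}=0$.
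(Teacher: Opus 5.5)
Your proposal is correct and follows the paper's proof essentially verbatim. You substitute \cref{eq:P_xt-diff,eq:P_uk-diff,eq:P_ut-diff} into the stagewise cost to obtain the same trace expression, which is nonnegative because $Q_t\succeq 0$, $R_t\succ 0$ and $P_{e,k}\succeq 0$, and you deduce (b) from the same Loewner orderings $P'_{x,t}\preceq P_{x,t}$, $P'_{u,t}\preceq P_{u,t}$ combined with covariance-monotonicity; carrying the hypothesis $\delta u_t = H_t\delta x_t$ a.s.\ for $t>k$ explicitly matches how the paper uses this lemma in the backward induction of \cref{thm:markov}.
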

\begin{proof}

	[Proof of (a)] Let $P_{x,t}$, $P_{u,t}$ be the covariances of the state and control under $\pi^C$. The cost for $\pi^C$ is written as
	\begin{equation} \label{eq:cost-causal-stepwise}
		\begin{split}
		&J = \sum_{t=0}^N \mu_t^\top Q_t \mu_t 
		+ \sum_{t=0}^{N-1} v_t^\top R_t v_t \\
		& \quad + \sum_{t=0}^{N} \mathrm{Tr}(Q_t P_{x,t}) + \sum_{t=0}^{N-1} 
		\mathrm{Tr}(R_t P_{u,t}).
		\end{split}
	\end{equation}
	$J'(k)$ can be written with $P_{x,t}'$ and $P_{u,t}'$ in place of $P_{x,t}$ and $P_{u,t}$. Taking the difference and using \cref{eq:P_xt-diff,eq:P_uk-diff,eq:P_ut-diff} gives
	\begin{align*}
		J-J'(k)={}&\mathrm{Tr}(R_kP_{e,k})
		+\sum_{t=k+1}^{N}\mathrm{Tr}\!\left(Q_tT_{t,k}P_{e,k}T_{t,k}^\top\right) \\
		&+\sum_{t=k+1}^{N-1}\mathrm{Tr}\!\left(R_tH_tT_{t,k}P_{e,k}T_{t,k}^\top{H_t}^\top\right).
	\end{align*}
	Since $R_t \succ 0$ and $Q_t \succeq 0$, the result follows.

	[Proof of (b)] From \cref{eq:P_xt-diff,eq:P_uk-diff,eq:P_ut-diff}, 
	\begin{equation*}
		P_{x,t} \succeq P_{x,t}', \quad P_{u,t} \succeq P_{u,t}'.
	\end{equation*}
	Since the constraints in \cref{eq:problem} are covariance-monotonic, if $\pi^C$ is feasible, then $\pi'(k)$ is also feasible.
\end{proof}
Note that both \cref{lem:covariance-diff} and \cref{lem:residual-dominance} hold for any $\pi^C$, regardless of whether $\pi^C$ is optimal or not.

\subsection{Almost-Sure Equivalence of the Constructed Policy}

Now, suppose that $\pi^C$ is an optimal policy for \cref{eq:problem}. Then we can apply backward induction and recursively use \cref{lem:residual-dominance} to show that the error term $e_k$ (which, recall, is the difference between the control actions at time $k$ under $\pi^C$ and $\pi^M$) must be zero almost surely. By iterating through $\pi'(k)$ backward from $k=N-1$, we finally reach $\pi'(0)$, which is equivalent to $\pi^M$. This is shown in \cref{thm:markov}.
\begin{theorem}  \label{thm:markov}
	Let $\pi^C$ be an optimal solution of the unrestricted state-history-affine formulation \cref{eq:parameterized-problem}, and let $\pi^M$ be the Markovian policy constructed from $\pi^C$ using \cref{alg:markov-construction}\footnote{If, at any time step $k$, the state covariance $P_{x,k}$ is rank-deficient, then the Markovian gain $H_k$ obtained from \cref{eq:H-e-def} in \cref{alg:markov-construction} is not unique and can be chosen arbitrarily in the null space of $P_{x,k}$ without affecting the claim of the theorem. See \cref{sec:general-solution-singular-covariance} for details.}. Then,
	\begin{equation}
		\pi_k^C(x_0, \dots, x_k) = \pi_k^M(x_k) \quad a.s., \qquad k=0,\dots,N-1.
	\end{equation}
\end{theorem}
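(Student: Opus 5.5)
We argue by backward induction on $k$, from $k=N-1$ down to $k=0$, with the inductive claim that $e_t=0$ almost surely, i.e.\ $\delta u_t=H_t\delta x_t$ a.s., for all $t\ge k$. The key observation is that the hypothesis of \cref{lem:covariance-diff} at step $k$ is exactly the inductive claim for $t=k+1,\dots,N-1$. At $k=N-1$ this hypothesis is vacuous.

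Fix $k$ and assume the claim for all $t>k$. The policy $\pi'(k)$ of \cref{eq:u_t-prime} is itself state-history-affine. At times $t\ge k$ it uses the gains $K_{t,t}=H_t$ with $K_{t,i}=0$ for $i<t$, and it keeps the same $\bm v$. Hence it is feasible for \cref{eq:parameterized-problem}. By \cref{lem:residual-dominance}(b), $\pi'(k)$ satisfies the covariance-monotonic constraints. By \cref{lem:residual-dominance}(a), $J'(k)\le J$. Optimality of $\pi^C$ then forces $J-J'(k)=0$.

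The explicit difference formula in the proof of \cref{lem:residual-dominance}(a) is a sum of nonnegative traces, so each term vanishes. In particular $\mathrm{Tr}(R_kP_{e,k})=0$. Since $R_k\succ 0$ and $P_{e,k}\succeq 0$, this gives $P_{e,k}=0$. Because $\E{e_k}=0$, we conclude $e_k=0$ almost surely, which establishes the claim at step $k$.

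After the step $k=0$, we have $e_t=0$ a.s. for all $t=0,\dots,N-1$. By \cref{eq:e-def} this is precisely $\pi_t^C(x_{0:t})=\pi_t^M(x_t)$ a.s. Note that $\pi^M$ evaluated along the trajectories of $\pi^C$ is what \cref{eq:e-def} compares. For the closed-loop statement, an induction on $t$ shows that the trajectories under $\pi^M$ and under $\pi^C$ coincide a.s.: equal controls at step $t$ produce equal states at step $t+1$.

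The main subtlety is keeping track of which process each statement refers to. The hypothesis of \cref{lem:covariance-diff} is phrased for the deviations $\delta x_t,\delta u_t$ under $\pi^C$, not under $\pi'$, and that is exactly what the induction supplies. A second point is that the non-uniqueness of $H_k$ when $P_{x,k}$ is singular is harmless. \cref{lem:ek-properties} only needs $H_kP_{x,k}=P_{ux,k}$, and $\delta x_k$ lies in $\mathrm{range}(P_{x,k})$ a.s., so any valid choice of $H_k$ yields the same $H_k\delta x_k$ a.s.
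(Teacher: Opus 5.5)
Your proposal is correct and follows the paper's proof. Both run a backward induction from $k=N-1$, use the inductive claim $e_t=0$ a.s.\ for $t>k$ as the hypothesis of \cref{lem:covariance-diff}, compare $\pi^C$ with $\pi'(k)$ via \cref{lem:residual-dominance}, and conclude $P_{e,k}=0$, hence $e_k=0$ a.s. Your explicit remarks fill in steps the paper leaves implicit: that $\pi'(k)$ lies in the state-history-affine class, and that strictness comes from $\mathrm{Tr}(R_kP_{e,k})>0$ whenever $P_{e,k}\neq 0$.
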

\begin{proof}
	From $\cref{eq:e-def}$,
	we want to show that $e_k=0$ a.s. for all $k$. 
	Since $\E{e_k} = 0$, it suffices to show that $P_{e,k}=0$ for all $k$. We proceed by backward induction.

	\textbf{Induction hypothesis:} At time step $k < N-1$, assume that $e_t = 0$ a.s. for $t = k+1, \dots, N-1$. 
	
	The induction hypothesis satisfies the assumption of \cref{lem:residual-dominance}.
	Let us construct policy $\pi'(k)$ as in \cref{eq:u_t-prime}, with covariance $P_{x,t}'$ and $P_{u,t}'$.
	If $P_{e,k} \neq 0$, then from \cref{lem:residual-dominance}, $J - J' > 0$ while $\pi'(k)$ is feasible. This contradicts the optimality of $\pi^C$. Therefore, $P_{e,k} = 0$.
	
	The base case $k=N-1$ automatically satisfies the induction hypothesis, since there are no later time steps. 

\end{proof}

\subsection{A Stronger Result for Nondegenerate Disturbances}
\cref{thm:markov} establishes the equivalence of the control actions implemented by the two policies. 
\cref{lem:equivalence-stochastic-support} (specifically, (c)) provides a formula for numerically verifying the equivalence of the two policies. We also use this lemma to show a stronger result for \cref{thm:markov} in a special case, in \cref{thm:nondegenerate}.

\begin{lemma} \label{lem:equivalence-stochastic-support}
	Let $K$ and $P_X$ be defined as in \cref{eq:K-definition,eq:P_X-definition}, respectively, and define
	\begin{equation} \label{eq:K_M}
		K_M := \begin{bmatrix}
			\mathrm{blkdiag}(H_0, H_1, \dots, H_{N-1}) & 0_{mN \times n}
		\end{bmatrix},
	\end{equation}
	and $\delta \bm{x} = \bm{x} - \E{\bm{x}}$.
	The following statements are equivalent:
	\begin{enumerate}[label=(\alph*)]
		\item $\delta u_k = \sum_{i=0}^{k} K_{k,i} \delta x_i = H_k \delta x_k$ a.s., for all $k = 0, \dots, N-1$.
		\item $(K - K_M) \delta \bm{x} = 0$ a.s.
		\item $ (K - K_M) P_X^{1/2} = 0 $
	\end{enumerate}
\end{lemma}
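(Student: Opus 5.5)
I will prove the chain (a) $\Leftrightarrow$ (b) $\Leftrightarrow$ (c). Both equivalences are essentially bookkeeping. The only analytic ingredient is the standard fact that a zero-mean random vector $y$ with covariance $P_y$ satisfies $y=0$ a.s. if and only if $P_y=0$, because $\E{\|y\|^2}=\mathrm{Tr}(P_y)$.

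\textbf{(a) $\Leftrightarrow$ (b).} I will write out the block rows of $(K-K_M)\delta\bm x$. By the block structure of $K$ in \cref{eq:K-definition} and of $K_M$ in \cref{eq:K_M}, the $k$-th block row ($k=0,\dots,N-1$) equals
\begin{equation*}
\sum_{i=0}^{k} K_{k,i}\delta x_i - H_k\delta x_k .
\end{equation*}
The last block column of $K$ and of $K_M$ is zero, so $\delta x_N$ never appears. Since $\delta\bm u = K\delta\bm x$ under $\pi^C$, the first equality in (a) always holds. Statement (b) then says exactly that every block row vanishes a.s. A finite conjunction of almost-sure events is almost sure, so (a) and (b) are equivalent.

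\textbf{(b) $\Leftrightarrow$ (c).} I will set $y := (K-K_M)\delta\bm x$. Since $\delta\bm x$ is zero-mean with covariance $P_X$, $y$ is zero-mean with covariance
\begin{equation*}
(K-K_M)P_X(K-K_M)^\top = \bigl[(K-K_M)P_X^{1/2}\bigr]\bigl[(K-K_M)P_X^{1/2}\bigr]^\top ,
\end{equation*}
where $P_X^{1/2}$ is the symmetric positive semidefinite square root. A matrix of the form $MM^\top$ is zero if and only if $M=0$, since $\mathrm{Tr}(MM^\top)=\|M\|_F^2$. Therefore $\cov{y}=0$ if and only if (c) holds. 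By the fact stated at the outset, $\cov{y}=0$ if and only if $y=0$ a.s., which is (b).

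There is no genuine obstacle. The points that need care are that $P_X$ may be singular, so one must work with the covariance of $y$ rather than invert anything, and that the zero last block column of $K$ and $K_M$ must be tracked correctly when passing to the stacked vector $\delta\bm x$. Gaussianity is not even needed here; second moments suffice.
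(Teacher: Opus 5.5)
Your proposal is correct and follows essentially the same route as the paper. You get (a)$\Leftrightarrow$(b) by reading off the block rows of the stacked identity, noting the zero last block column of $K$ and $K_M$. You get (b)$\Leftrightarrow$(c) by writing $\cov{(K-K_M)\delta\bm{x}} = \bigl[(K-K_M)P_X^{1/2}\bigr]\bigl[(K-K_M)P_X^{1/2}\bigr]^\top$ and using $\E{\|y\|_2^2}=\mathrm{Tr}(\cov{y})$ for zero-mean $y$, which uses only second moments, as in the paper.
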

\begin{proof}

	[(a) $\Leftrightarrow$ (b)] By stacking the equations in (a) vertically for all $k$, we obtain (b). Conversely, the $k$-th block row of (b) is equivalent to the $k$-th equation in (a).

	[(b) $\Leftrightarrow$ (c)] $(\Rightarrow)$: Let $D = K - K_M$. (b) implies
	\begin{equation*}
		\cov{D \delta \bm{x}}
		= D P_X D^\top
		= (D P_X^{1/2}) (D P_X^{1/2})^\top
		= 0.
	\end{equation*}
	This implies $D P_X^{1/2} = 0$.

	\noindent ($\Leftarrow$): Suppose $D P_X^{1/2} = 0$. Then, $\cov{D \delta \bm{x}} = 0$. Since $\E{D \delta \bm{x}} = 0$,
	\begin{equation*}
		\E{\|D \delta \bm{x}\|_2^2}
		= \mathrm{Tr}(\cov{D \delta \bm{x}})
		= 0.
	\end{equation*}
	Since $\|D \delta \bm{x}\|_2^2 \geq 0$, $D \delta \bm{x} = 0$ a.s.
\end{proof}

\cref{lem:PD-equivalence} connects the positive definiteness of the disturbance (initial state and process noise) covariances to the positive definiteness of the state history covariance. We will also use this for the proof of \cref{thm:nondegenerate}. 
\begin{lemma} \label{lem:PD-equivalence}
	For \cref{eq:problem}, let $S$ and $P_X$ be defined as in \cref{eq:S-definition} and \cref{eq:P_X-definition}.
	The following statements are equivalent:
	\begin{enumerate}[label=(\alph*)]
		\item $P_0 \succ 0$ and $G_k G_k^\top \succ 0$ for all $k = 0, \dots, N-1$.
		\item $S \succ 0$.
		\item $P_X \succ 0$.
	\end{enumerate}
\end{lemma}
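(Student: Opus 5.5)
We establish (b)$\Leftrightarrow$(c) and (a)$\Leftrightarrow$(b) separately. Throughout, we rely on the structure of $S$ as defined in \cref{eq:S-definition}. Under the batch dynamics $\bm{x} = A x_0 + B\bm{u} + G\bm{w}$, the uncontrolled deviation is $F\bm d$, with $\cov{\bm d} = \Sigma_w = \mathrm{blkdiag}(P_0, G_0G_0^\top, \dots, G_{N-1}G_{N-1}^\top)$. Hence $S = F\Sigma_w F^\top$, where $F$ is the block lower triangular state-transition matrix with identity diagonal blocks. We use exactly this factorization, since it is consistent with \cref{eq:batch-moments-disturbance-feedback} and \cref{eq:batch-moments-Youla}: with $K=0$ and $L=0$, both give $P_X = S$ and $P_X = F\Sigma_wF^\top$, respectively.

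\textbf{(b)$\Leftrightarrow$(c).} By \cref{eq:P_X-definition}, $P_X = M S M^\top$ with $M = (I-BK)^{-1}$. The matrix $M$ is invertible because $BK$ is strictly block lower triangular, as noted in the paper. A congruence by an invertible matrix preserves positive definiteness: for $z \neq 0$, we have $z^\top P_X z = (M^\top z)^\top S (M^\top z)$ with $M^\top z \neq 0$, and the converse follows by using $M^{-1}$. Therefore $S \succ 0 \iff P_X \succ 0$, for any block lower triangular $K$.

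\textbf{(a)$\Leftrightarrow$(b).} By the same congruence argument, now applied to $S = F\Sigma_wF^\top$ with $F$ invertible (it is unit block lower triangular, so $\det F = 1$), we get $S \succ 0 \iff \Sigma_w \succ 0$. A block-diagonal matrix is positive definite if and only if each diagonal block is positive definite. Thus $\Sigma_w \succ 0 \iff P_0 \succ 0$ and $G_kG_k^\top \succ 0$ for all $k$, which is exactly (a).

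The only point requiring care is confirming that the appendix definition of $S$ matches $F\Sigma_wF^\top$ with an invertible $F$, with the initial-state block $P_0$ and the noise blocks $G_kG_k^\top$ placed on the diagonal of $\Sigma_w$. This should be read off directly from \cref{sec:block-matrix-definitions}. If $S$ is instead written as $\Phi\Sigma_w\Phi^\top$, where $\Phi$ is the free-response transition matrix, the argument is unchanged, since $\Phi$ is also unit block lower triangular and hence invertible. The rest of the proof is routine linear algebra.
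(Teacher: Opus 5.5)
Your proof is correct and takes the same route as the paper: $(b)\Leftrightarrow(c)$ by congruence with the invertible matrix $(I-BK)^{-1}$, and $(a)\Leftrightarrow(b)$ via $S=F\Sigma_wF^\top$ with $F$ unit block lower triangular (hence invertible), together with the block-diagonal structure of $\Sigma_w$. The factorization you flagged as needing confirmation is exactly the second equality in \cref{eq:S-definition}, so no further check is needed.
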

\begin{proof}
		Let $Z_A$, $\Sigma_w$, and $F$ be defined as in \cref{sec:block-matrix-definitions}.

	[(a) $\Leftrightarrow$ (b)]: Since $Z_A$ is strictly block lower triangular,
	\begin{equation*}
		F=(I-Z_A)^{-1}=I+Z_A+\dots+Z_A^N
	\end{equation*}
	is invertible \cite[Chapter 3.1.7]{golubMatrixComputations2013}. Moreover, $S=F\Sigma_wF^\top$ by \cref{eq:S-definition}.
	Therefore, (a) is equivalent to $\Sigma_w \succ 0$, which is equivalent to $S \succ 0$.
	
	[(b) $\Leftrightarrow$ (c)]: This follows from
	$P_X = (I - B K)^{-1} S (I - B K)^{-\top}$ and invertibility of $(I - B K)$.
\end{proof}

\cref{lem:equivalence-stochastic-support,lem:PD-equivalence} combine to show the equivalence of the gains for $\pi^C$ and $\pi^M$ in a special case where the covariance matrices of the initial state and the process noise are positive definite. 
\begin{theorem} \label{thm:nondegenerate}
	Let $P_0 \succ 0$ and $G_k G_k^\top \succ 0$ for all $k = 0, \dots, N-1$. Then, let $K$ be an optimal solution to \cref{eq:parameterized-problem}. Let $K_M$ be obtained via \cref{alg:markov-construction} and \cref{eq:K_M} from $K$. Then,
	\begin{equation}
		K = K_M.
	\end{equation}
\end{theorem}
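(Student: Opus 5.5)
The plan is to combine \cref{thm:markov}, \cref{lem:equivalence-stochastic-support}, and \cref{lem:PD-equivalence}. Since $K$ is optimal for \cref{eq:parameterized-problem}, the corresponding state-history-affine policy $\pi^C$ is an optimal solution. The Markovian policy $\pi^M$ is built from it by \cref{alg:markov-construction}. \cref{thm:markov} then gives $\pi_k^C(x_{0:k}) = \pi_k^M(x_k)$ almost surely for every $k=0,\dots,N-1$. Subtracting the common feedforward $v_k$ and using \cref{eq:e_k-with-delta}, this reads $\delta u_k = \sum_{i=0}^k K_{k,i}\delta x_i = H_k \delta x_k$ a.s. for all $k$. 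That is exactly statement (a) of \cref{lem:equivalence-stochastic-support}, so statement (c) holds:
\begin{equation*}
	(K - K_M) P_X^{1/2} = 0 .
\end{equation*}

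Next, invoke \cref{lem:PD-equivalence}. The hypotheses $P_0 \succ 0$ and $G_kG_k^\top \succ 0$ are statement (a) of that lemma, so $P_X \succ 0$. Here $P_X$ is the closed-loop state-history covariance generated by the optimal $K$. The lemma applies for any block lower triangular $K$, since only invertibility of $I-BK$ is used. Hence $P_X^{1/2}$ is invertible. Right-multiplying the display above by $P_X^{-1/2}$ yields $K - K_M = 0$, that is, $K = K_M$.

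A small bookkeeping point needs checking. The last block column of $K$ in \cref{eq:K-definition} is zero, and $K_M$ in \cref{eq:K_M} has a zero last block column $0_{mN\times n}$. The block diagonal of $K_M$ sits in the positions of $K_{k,k}$. So the two matrices have the same dimensions and sparsity pattern, and the identity is meaningful entrywise.

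I do not expect a genuine obstacle, since the heavy lifting was done in \cref{thm:markov}. The only subtlety is that nonsingularity of $P_X$ also makes every $P_{x,k}$ nonsingular, as principal blocks of a positive definite matrix. Consequently $P_{x,k}^\dagger = P_{x,k}^{-1}$, each $H_k$ is uniquely defined, and the footnote ambiguity in \cref{thm:markov} disappears. This guarantees that $K_M$ is well defined and that the conclusion is an equality of matrices rather than of equivalence classes. As a byproduct, the off-diagonal gains $K_{k,i}$ for $i<k$ vanish and $K_{k,k} = H_k$.
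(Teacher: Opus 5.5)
Your proposal is correct and matches the paper's proof: \cref{thm:markov} together with (a)$\Leftrightarrow$(c) of \cref{lem:equivalence-stochastic-support} gives $(K-K_M)P_X^{1/2}=0$, \cref{lem:PD-equivalence} gives $P_X\succ 0$, and multiplying on the right by $P_X^{-1/2}$ yields $K=K_M$. Your added remarks are also correct: the zero last block column of both matrices, and $P_{x,k}\succ 0$ making each $H_k$ unique so that $K_M$ is well defined. They spell out details the paper leaves implicit.
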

\begin{proof}
	Equivalence of (a) and (c) in \cref{lem:PD-equivalence} implies $P_X \succ 0$. 
	From \cref{thm:markov} and equivalence of (a) and (c) in \cref{lem:equivalence-stochastic-support}, we have $(K - K_M) P_X^{1/2} = 0$. Combining, we have $K = K_M$.
\end{proof}
The result here is stronger than \cref{thm:markov} in that the functional form of the optimal policy is deterministically equivalent, as opposed to the control actions being (almost surely) equivalent in \cref{thm:markov}. 
When the assumptions of \cref{thm:nondegenerate} are satisfied, the optimal policy obtained by solving \cref{eq:parameterized-problem} is readily Markovian; $K$ is block diagonal, and we can use $H_k = K_{k,k}$ for all $k$ instead of \cref{eq:H-e-def}. \cref{eq:H-e-def} can also be used as it gives the same value as $K_{k,k}$.

\subsection{Effect of Limiting the Solution Space} \label{sec:limiting-sol-space}
Convex reformulations of \cref{eq:parameterized-problem} involve a change of variables from $K$ to another variable. Several works recommend restricting the solution space in this variable for computational efficiency \cite{skafDesignAffineControllers2010,okamotoOptimalStochasticVehicle2019,balciConstrainedMinimumVariance2024}.
However, such restrictions may exclude the recovered Markovian controller from the feasible set, meaning that $\pi^M$ obtained via \cref{alg:markov-construction} may not be equivalent to $\pi^C$ in the sense of \cref{thm:markov}.
The numerical section illustrates this.

\textbf{Example:} The Youla parameterization in \cref{eq:K-L} is used in \cite{skafDesignAffineControllers2010,bakolasOptimalCovarianceControl2016,okamotoOptimalCovarianceControl2018}. Since $K$ is block lower triangular, $L$ is also block lower triangular; one may restrict $L$ to be, for example, block diagonal \cite{skafDesignAffineControllers2010}. However, if we 1) obtain $L$ in this restricted solution space, 2) convert it to $K$ via \cref{eq:L2K}, 3) obtain $H_k$ and $K_M$ via \cref{alg:markov-construction} and \cref{eq:K_M}, and 4) map $K_M$ back to $L$ via \cref{eq:K2L}, the resulting $L$ is not block diagonal in general. Therefore, the recovered Markovian controller need not belong to the restricted feasible set expressed in the $L$ variable.

\subsection{Discussion}

The results of this section invite new insights into the covariance steering literature and a broader class of finite-horizon stochastic optimal control problems. 

\subsubsection{Expanded Class of Problems with Markovian Optimal Policies}
\cite{liuOptimalCovarianceSteering2025} proves the optimality of policies of the form \cref{eq:markov-controller} for unconstrained covariance steering.
For constrained problems, \cref{thm:markov} makes a different statement: every optimal solution of the unrestricted state-history-affine formulation \cref{eq:parameterized-problem} admits an equivalent Markovian realization. It does not establish that affine policies are globally optimal for constrained problems; this remains unknown.


\subsubsection{Online Storage Complexity}
For a horizon of length $N$, the state-history-affine policy contains
$N(N+1)/2$ gain matrices $K_{k,i}\in\R^{m\times n}$, whereas the
recovered Markovian policy contains only $N$ gains
$H_k\in\R^{m\times n}$. The required gain storage therefore decreases
from $N(N+1)mn/2$ to $Nmn$, and online evaluation requires only the
current state deviation.
The offline synthesis problem remains unaffected, since \cref{alg:markov-construction} is a post-processing step.

\subsubsection{Connection to Previous Work in Covariance Steering} 


The batch formulations that we treat in this work
\cite{okamotoOptimalCovarianceControl2018,
balciConstrainedMinimumVariance2024} use state-history-affine policy
parameterizations for convexity. In contrast, covariance-propagation
formulations
\cite{liuOptimalCovarianceSteering2025,
rapakouliasDiscreteTimeOptimalCovariance2023} use Markovian policies
directly. Our previous comparison
\cite{kumagaiSquareRootFactorizedCovariance2026} identifies the growing storage requirements as an implementation disadvantage of the batch
formulation. \cref{thm:markov} shows that for problems of the form \cref{eq:problem}, after solving the unrestricted
state-history-affine problem, this history dependence is not required for
online implementation.

Since \cref{thm:markov} shows that the batch formulation admits the same Markovian implementation form as the covariance-propagation formulations for \cref{eq:problem}, their differences can be understood through how much each formulation lifts the solution space for convexity. The Markovian gain parameterization has dimension $Nmn$. The batch formulation lifts the solution space to state-history-affine policies, which has dimension $N(N+1)mn/2$, while the covariance-propagation formulation lifts the solution space to the state-control augmented covariance matrix, which has dimension $N(n+m)(n+m+1)/2$. For sufficiently large $N$, the batch formulation has a larger solution space than the covariance-propagation formulation. An interpretation of the convexity of chance constraints in the batch formulation may be that it is a consequence of the larger solution space.

\subsubsection{Extension to General Finite-Horizon Stochastic Control}
Although the focus of this work is specifically on covariance steering, the use of state-history-affine policies is common in the finite-horizon control of uncertain systems, such as robust control \cite{goulartOptimizationStateFeedback2006a} and system level synthesis \cite{andersonSystemLevelSynthesis2019}. The results of this work may be applicable to these problems as well, but further investigation is needed.

\section{Extension to Output Feedback}
We extend the results to controllers that have access only to noisy state measurements. This extension requires slight modifications to account for the state estimation process.

Consider the observation process
\begin{equation*}
y_k = C_k x_k + D_k \eta_k,
\end{equation*}
where $\eta_k\sim\normal(0,I)$ is i.i.d. and independent of $x_0$ and $\{w_k\}_{k=0}^{N-1}$.
Define the filtration $\{\mathcal F_k\}_{k=-1}^N$ by $\mathcal{F}_{-1} = \sigma(\hat{x}_0^-)$ and
$\mathcal F_k:=\sigma(\hat{x}_0^-,y_0,\dots,y_k)$, where $\hat{x}_0^-$ is the initial state estimate and $\hat x_k:=\E{x_k\mid\mathcal F_k}$ \cite{ridderhofChanceConstrainedCovariance2020}. 
The initial state estimate $\hat{x}_0^-$ is assumed to be Gaussian and independent of the initial estimation error $\widetilde x_0^-:=x_0-\hat x_0^-$. 
The estimate-history-affine and Markovian (i.e., current-estimate-affine) controllers are defined as
\begin{equation} \label{eq:causal-controller-output}
	u_k = \pi_k^C(\hat{x}_0,\hat{x}_1,\dots,\hat{x}_k) = v_k+\sum_{i=0}^k K_{k,i}\bigl(\hat x_i-\E{\hat x_i}\bigr),
\end{equation}
and
\begin{equation} \label{eq:markov-controller-output}
	u_k = \pi_k^M(\hat x_k) = v_k+H_k\bigl(\hat x_k-\E{\hat x_k}\bigr),
\end{equation}
respectively. 

Then, the moment equations for the batch state and control \cref{eq:batch-moments} and those for the individual time steps \cref{eq:moments} are modified to account for the estimation process in \cref{lem:output-fb-moments}.
\begin{lemma}
\label{lem:output-fb-moments}
	Let $\widetilde x_k:=x_k-\hat x_k$, $\widetilde P_k:=\cov{\widetilde x_k}$,
	$P_{\hat x,k}:=\cov{\hat x_k}$, and
	$P_{u\hat x,k}:=\cov{u_k,\hat x_k}$. Writing
	$\hat{\bm x}:=[\hat x_0^\top,\dots,\hat x_N^\top]^\top$, its covariance is
	obtained from \cref{eq:batch-moments} as
	\begin{equation}
		\begin{aligned}
			P_{\widehat X}&=(I-BK)^{-1}\widehat S(I-BK)^{-\top},\\
			P_{\hat x,k}&=E_k^xP_{\widehat X}{E_k^x}^\top,\\
			P_{u\hat x,k}&=E_k^u K P_{\widehat X}{E_k^x}^\top,
		\end{aligned}
		\label{eq:output-fb-estimate-batch-covariance}
	\end{equation}
	where $\widehat S$ is a constant matrix formed from the initial estimate covariance and the Kalman innovation covariances. Then the moments in \cref{eq:moments} under the controller \cref{eq:causal-controller-output} satisfy
	\begin{equation}
		\begin{split}
		\mu_k &= \E{x_k}=\E{\hat x_k}, \qquad
		P_{x,k} = P_{\hat x,k}+\widetilde P_k,\\
		v_k &= \E{u_k}, \qquad
		P_{ux,k}=P_{u\hat x,k}. \label{eq:output-fb-control-moments}
		\end{split}
	\end{equation}
\end{lemma}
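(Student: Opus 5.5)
We plan to rely on the standard innovations structure of the Kalman filter, which holds under the estimate-history-affine controller \cref{eq:causal-controller-output}. Because $u_k$ is $\mathcal F_k$-measurable and affine in past estimates, the estimation error $\widetilde x_k$ does not depend on the control policy. It evolves as in the uncontrolled system, and its covariance $\widetilde P_k$ is given by the Riccati recursion regardless of $K$.

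\textbf{Step 1: estimator dynamics.} We first write the filter recursion
\begin{equation*}
\hat x_{k+1}=A_k\hat x_k+B_ku_k+\nu_{k+1},
\end{equation*}
where $\nu_{k+1}:=\hat x_{k+1}-\E{x_{k+1}\mid\mathcal F_k}$ is the (scaled) innovation term. The innovations $\nu_0:=\hat x_0-\E{\hat x_0}$, $\nu_1,\dots,\nu_N$ are zero-mean Gaussian and mutually uncorrelated. Their covariances are deterministic and fixed by the Kalman gains, so they do not depend on $K$. This is the structure used in \cite{ridderhofChanceConstrainedCovariance2020}. Stacking gives $\hat{\bm x}=A\E{\hat x_0}+B\bm u+\bm\nu$, which has exactly the form of the batch state-feedback dynamics with $G\bm w$ (and the initial deviation) replaced by $\bm\nu$. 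Substituting $\bm u=\bm v+K(\hat{\bm x}-\E{\hat{\bm x}})$ and repeating the derivation of \cref{eq:batch-moments} yields \cref{eq:output-fb-estimate-batch-covariance}, with $\widehat S:=F\,\cov{\bm\nu}F^\top$ playing the role of $S$. The remaining two formulas in \cref{eq:output-fb-estimate-batch-covariance} then follow from the selectors $E_k^x,E_k^u$, using $\cov{\bm u,\hat{\bm x}}=KP_{\widehat X}$.

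\textbf{Step 2: the moments of $x_k$.} We take expectations of $\widetilde x_k=x_k-\hat x_k$. Since $\E{\widetilde x_k}=\E{\E{x_k-\hat x_k\mid\mathcal F_k}}=0$, we get $\mu_k=\E{\hat x_k}$, and in particular the mean of $u_k$ is $v_k$.

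For the covariance, the key fact is orthogonality: $\widetilde x_k$ is uncorrelated with every $\mathcal F_k$-measurable square-integrable random variable. Indeed, for such a $Z$,
\begin{equation*}
\E{\widetilde x_k Z^\top}=\E{\E{\widetilde x_k\mid\mathcal F_k}Z^\top}=0.
\end{equation*}
Applying this with $Z=\hat x_k$ gives $P_{x,k}=P_{\hat x,k}+\widetilde P_k$. Applying it with $Z=u_k$, which is $\mathcal F_k$-measurable because it is affine in $\hat x_{0:k}$, gives
\begin{equation*}
\cov{u_k,x_k}=\cov{u_k,\hat x_k}+\cov{u_k,\widetilde x_k}=P_{u\hat x,k}.
\end{equation*}

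\textbf{Main obstacle.} The delicate point is Step 1: justifying that the innovation process, and hence $\widehat S$, is independent of the gains $K$, and that the innovations are mutually uncorrelated when the control is a feedback of past estimates. We would argue this by the separation principle. Since $u_k\in\mathcal F_k$ and the dynamics are linear, $\widetilde x_k$ and the innovations satisfy the same recursions as under zero control, because the known control cancels in the prediction. The innovations $\nu_{k+1}$ are also orthogonal to $\mathcal F_k$ by construction, which gives whiteness. Joint Gaussianity then follows because every quantity is affine in $(x_0,\hat x_0^-,w,\eta)$.
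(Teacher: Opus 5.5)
Your proposal is correct and follows essentially the same route as the paper, whose proof is a one-line appeal to the independence of the Kalman estimation error from the estimate \cite{ridderhofChanceConstrainedCovariance2020}; you flesh this out, and your orthogonality-to-$\mathcal F_k$ formulation is exactly what $P_{ux,k}=P_{u\hat x,k}$ needs, because $u_k$ depends on the whole history $\hat x_{0:k}$ and not only on $\hat x_k$. One cosmetic slip: if $\bm\nu$ denotes the stacked raw innovations, the stacked estimator dynamics should read $\hat{\bm x}=A\E{\hat x_0}+B\bm u+F\bm\nu$, which is what your $\widehat S=F\,\cov{\bm\nu}F^\top$ already assumes.
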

\begin{proof}
	The result follows from the independence of the estimation error $\widetilde x_k$ and the Kalman estimate $\hat x_k$ \cite{ridderhofChanceConstrainedCovariance2020}.
\end{proof}

Using \cref{lem:output-fb-moments}, we can now modify \cref{eq:H-e-def} of \cref{alg:markov} and the proof of \cref{thm:markov} for the output-feedback case.
\begin{theorem} \label{prop:output-fb}
	Let $\pi^C$ be an optimal solution of the unrestricted estimate-history-affine formulation. Applying
	\cref{alg:markov} to the estimate process, using the moments in
	\cref{lem:output-fb-moments} and replacing \cref{eq:H-e-def} with
	\begin{equation} \label{eq:H_k-output}
		H_k=P_{u\hat x,k}{P_{\hat x,k}}^{\dagger}, \qquad k=0,\dots,N-1,
	\end{equation}
	yields a current-estimate-affine policy satisfying
	\begin{equation}
		\pi_k^C(\hat x_0,\dots,\hat x_k)
		=\pi_k^M(\hat x_k)\quad\text{a.s.}, \qquad k=0,\dots,N-1.
	\end{equation}
\end{theorem}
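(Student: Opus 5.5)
The plan is to repeat the backward-induction argument of \cref{thm:markov} with the Kalman estimate $\hat x_k$ playing the role of the state. The key structural fact is that, under any estimate-history-affine or current-estimate-affine policy, the estimate obeys the innovation form
\begin{equation*}
	\hat x_{k+1} = A_k \hat x_k + B_k u_k + \nu_{k+1},
\end{equation*}
where $\nu_{k+1}$ collects the Kalman-gain-weighted innovation. Because the system is linear Gaussian, the filter gains and error covariances $\widetilde P_k$ do not depend on the control policy. Consequently, the innovation sequence is zero-mean, white, Gaussian, independent of $\hat x_0$ and of past estimates and controls, and has policy-independent covariance. This is exactly the structure of \cref{eq:problem}, with $x_k$ replaced by $\hat x_k$, $x_0$ replaced by $\hat x_0$, and $G_kw_k$ replaced by $\nu_{k+1}$. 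It is what underlies $\widehat S$ in \cref{lem:output-fb-moments}.

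With $e_k := \delta u_k - H_k\delta\hat x_k$ and $H_k$ from \cref{eq:H_k-output}, I would first reprove \cref{lem:ek-properties}. The argument is verbatim with $P_{\hat x,k}$ and $P_{u\hat x,k}$, using $\ker P_{\hat x,k}\subseteq\ker P_{u\hat x,k}$. This gives $\E{e_k\delta\hat x_k^\top}=0$ and $P_{e,k}=P_{u,k}-P_{u\hat x,k}P_{\hat x,k}^\dagger P_{u\hat x,k}^\top$.

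Next, I would reprove \cref{lem:covariance-diff} for the mixed policy $\pi'(k)$ acting on estimates. The recursion gives $\delta\hat x_t=\delta\hat x_t'+T_{t,k}e_k$ for $t>k$. Here $e_k$ is uncorrelated with $\delta\hat x_k$ by the previous step, and uncorrelated with the future innovations $\nu_{t}$, $t>k$, because $e_k$ is $\mathcal F_k$-measurable and innovations are orthogonal to $\mathcal F_k$. Hence $P_{\hat x,t}=P_{\hat x,t}'+T_{t,k}P_{e,k}T_{t,k}^\top$, and the analogous identities hold for $P_{u,t}$. By \cref{lem:output-fb-moments}, $P_{x,t}=P_{\hat x,t}+\widetilde P_t$ with $\widetilde P_t$ unchanged between $\pi^C$ and $\pi'(k)$, so the true-state covariance differences are identical, and means are unchanged.

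Then \cref{lem:residual-dominance} carries over. The cost difference is again a sum of traces of $R_k P_{e,k}$ plus positive semidefinite terms, and covariance-monotonicity of $g_x,g_u$ gives feasibility of $\pi'(k)$. The backward induction of \cref{thm:markov} then forces $P_{e,k}=0$ for all $k$, and since $\E{e_k}=0$, we get $e_k=0$ a.s.

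The main obstacle I expect is justifying cleanly that the estimate dynamics with innovations match the hypotheses used earlier. This requires two facts: that the estimation error and the Kalman gains are policy-independent, so that $\widetilde P_t$ cancels and the comparison is fair; and that $e_k$ is uncorrelated with all future innovations. Both follow from standard separation properties of the Kalman filter for linear Gaussian systems with controls that are affine in $\mathcal F_k$-measurable quantities. I would state them explicitly, citing \cite{ridderhofChanceConstrainedCovariance2020}.
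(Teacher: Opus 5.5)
Your proposal is correct and follows the paper's route. Both arguments treat the Kalman estimate as a controlled linear Gaussian process with policy-independent innovations, note that $\widetilde P_k$ is a controller-independent constant (so the cost separates and $g_x(\E{\hat x_k},P_{\hat x,k}+\widetilde P_k)\le 0$ stays covariance-monotone in $P_{\hat x,k}$), and rerun the backward induction of \cref{thm:markov}. The difference is that you write out the lemma-by-lemma checks the paper compresses into ``\cref{thm:markov} applies to the estimate process,'' in particular that $e_k$, being $\mathcal F_k$-measurable, is uncorrelated with all future innovations.
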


\begin{proof}
	The Kalman estimate is a linear controlled Markov process with
	controller-independent Gaussian innovations, and $\widetilde P_k$ is
	deterministic and controller-independent
	\cite{ridderhofChanceConstrainedCovariance2020}. By
	\cref{lem:output-fb-moments}, the state/control means and the control covariances retain the same form as in \cref{eq:moments}. The only difference is the constant contribution $\widetilde P_k$ to the state covariance. Hence, the covariance contribution to the state cost
	in \cref{eq:cost-causal-stepwise} separates as
	\begin{equation*}
		\sum_{k=0}^N\mathrm{Tr}(Q_kP_{x,k})
		=\sum_{k=0}^N\mathrm{Tr}(Q_kP_{\hat x,k})
		+\sum_{k=0}^N\mathrm{Tr}(Q_k\widetilde P_k).
	\end{equation*}
	The first term is controller-dependent, whereas the second is constant.
	Each state constraint in \cref{eq:problem} becomes
	$g_x(\E{\hat x_k},P_{\hat x,k}+\widetilde P_k)\leq0$, which remains
	covariance-monotone in $P_{\hat x,k}$. Hence \cref{thm:markov} applies to
	the estimate process and gives the result.
\end{proof}

\section{Extension to a Value-at-Risk Surrogate} \label{sec:var}

We have focused on the standard quadratic cost thus far. 
Here, we show that when replacing the cost with a value-at-risk (VaR) surrogate, the Markovian policy from \cref{alg:markov-construction} is optimal, and interestingly, can have ``smaller'' covariances than for the original history-feedback controller.

For a scalar random variable $Z$, its $\beta$-VaR for $\beta \in (0,1)$ is defined as \cite{rockafellarOptimizationConditionalValueatRisk2000}
\begin{equation*}
	\mathrm{VaR}_\beta(Z) = \inf \{ z \in \R : \P{Z \leq z} \geq \beta \}.
\end{equation*}
As a quantile-based risk measure, VaR and its variants are often used in domains such as finance \cite{basakValueatRiskBasedRiskManagement2001}, robotics \cite{renganathanRiskBoundedNonlinear2023}, electric power systems \cite{kleindorferMultiPeriodVaRConstrainedPortfolio2005}, and aerospace \cite{cangahualaEuropaClipperMission2025}.
Especially for spacecraft control applications, 
minimizing the VaR of the control Euclidean norm is common \cite{oguriChanceConstrainedControlSafe2024a,kumagaiRobustCislunarLowThrust2025,kongCertifiedStochasticControl2026}. For a Gaussian control input $u_k \sim \normal(v_k, P_{u,k})$, the $(1-\gamma)$-VaR has an upper bound \cite{oguriChanceConstrainedControlSafe2024a}
\begin{align*}
	\mathrm{VaR}_{1-\gamma}(\|u_k\|_2)
	&\leq \|v_k\|_2
	+ \alpha_{m, \gamma} \sqrt{\lambda_{\max}(P_{u,k})}
\end{align*}
derived using the same machinery as in \cref{eq:cc-norm}.
The right-hand side can be converted to convex expressions with the same change of variables discussed in \cref{sec:description} \cite{oguriChanceConstrainedControlSafe2024a}. For example, under the change of variables in \cref{eq:K-L}, 
\begin{equation*}
	P_{u,k}=E_k^uLSL^\top {E_k^u}^\top, \qquad
	\sqrt{\lambda_{\max}(P_{u,k})} = \| E_k^u L S^{1/2} \|_2,
\end{equation*}
which is convex in $L$.
Motivated by these stagewise upper bounds, we consider the following additive surrogate \cite{oguriChanceConstrainedControlSafe2024a}:
\begin{equation} \label{eq:control-var}
	\widehat J_{\mathrm{VaR}}(\pi)
	:=\sum_{k=0}^{N-1}
	\left(
		\|v_k\|_2
		+\alpha_{m, \gamma}\sqrt{\lambda_{\max}(P_{u,k})}
	\right).
\end{equation}

\begin{proposition} \label{prop:var-markov-recovery}
	Consider \cref{eq:problem} with objective \cref{eq:control-var}. If
	$\pi^C$ is an optimal solution of the unrestricted state-history-affine formulation, then the Markovian
	controller $\pi^M$ constructed by \cref{alg:markov-construction} attains the same optimal objective value. Moreover, it preserves the state and control means and satisfies
	\begin{equation}
		P_{x,k}^M\preceq P_{x,k}^C,
		\qquad
		P_{u,k}^M\preceq P_{u,k}^C
	\end{equation}
	for all $k$, where the superscripts $C$ and $M$ denote the covariances under $\pi^C$ and $\pi^M$, respectively.
\end{proposition}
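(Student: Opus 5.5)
The plan is to avoid the backward-induction contradiction used in \cref{thm:markov}. That argument relied on the quadratic cost being \emph{strictly} increasing in $P_{e,k}$, through $R_k\succ0$. The surrogate \cref{eq:control-var} depends on the covariances only through $\lambda_{\max}(P_{u,k})$, which is monotone but not strictly monotone. So we cannot conclude $e_k=0$ a.s. here. Instead we transform $\pi^C$ into $\pi^M$ one time step at a time and show directly that every covariance can only shrink along the way.

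Concretely, for $k=0,\dots,N$ let $\pi'(k)$ be the hybrid policy \cref{eq:u_t-prime}, with $\pi'(N)=\pi^C$ and $\pi'(0)=\pi^M$. Here the gains $H_t$ are always the fixed ones computed from the moments of $\pi^C$ by \cref{alg:markov-construction}. The key step is to compare $\pi'(k+1)$ and $\pi'(k)$, re-running the computation in the proof of \cref{lem:covariance-diff} with $\pi'(k+1)$ in the role of the baseline.

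Under both policies the states $x_0,\dots,x_k$ coincide with those under $\pi^C$, so the moments at step $k$ are those of $\pi^C$. The discrepancy at step $k$ is exactly $e_k=\delta u_k-H_k\delta x_k$, and \cref{lem:ek-properties} gives $\E{e_k\delta x_k^\top}=0$. For $t\ge k+1$ both policies apply $H_t\delta x_t$ exactly. Hence the hypothesis ``$\delta u_t=H_t\delta x_t$ a.s.'' of \cref{lem:covariance-diff} holds by construction rather than by optimality. The same propagation with $T_{t,k}$ then yields
\begin{equation*}
P_{x,t}^{(k+1)}=P_{x,t}^{(k)}+T_{t,k}P_{e,k}T_{t,k}^\top,\qquad P_{u,t}^{(k+1)}=P_{u,t}^{(k)}+H_tT_{t,k}P_{e,k}T_{t,k}^\top H_t^\top
\end{equation*}
for $t>k$, and $P_{u,k}^{(k+1)}=P_{u,k}^{(k)}+P_{e,k}$. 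Moreover, all means coincide, since the feedforward terms $v_t$ and the reference means $\mu_t$ are shared.

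Chaining these inequalities from $k=N-1$ down to $k=0$ gives $P_{x,t}^M\preceq P_{x,t}^C$ and $P_{u,t}^M\preceq P_{u,t}^C$ for all $t$, with equal means. This is the covariance claim of \cref{prop:var-markov-recovery}. Covariance-monotonicity of $g_x,g_u$ (\cref{def:covariance-monotonic}) then gives feasibility of $\pi^M$. Monotonicity of $\lambda_{\max}$ under $\preceq$, together with equality of the $v_k$, gives $\widehat J_{\mathrm{VaR}}(\pi^M)\le\widehat J_{\mathrm{VaR}}(\pi^C)$.

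Finally, $\pi^M$ is itself a member of the state-history-affine class, with gain $K_M$ from \cref{eq:K_M}. Optimality of $\pi^C$ over the unrestricted class therefore forces equality of the objective values.

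The step I expect to be the main obstacle is justifying the one-step comparison without the induction hypothesis. The subtle point is that the $H_t$ were built from $\pi^C$'s covariances, not from $\pi'(k+1)$'s. I will handle it by noting that \cref{lem:ek-properties} only needs the step-$k$ moments, which are unchanged up to time $k$. The later-time orthogonality $\cov{\delta x_t',e_k}=0$ then follows as before, because $\delta x_t'$ is an affine function of $\delta x_k$ and $w_{k:t-1}$ only.
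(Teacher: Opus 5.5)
Your proof is correct and uses the same ingredients as the paper's proof: the hybrid policies $\pi'(k)$, the covariance decomposition of \cref{lem:covariance-diff} built on the orthogonality in \cref{lem:ek-properties}, monotonicity of $\sqrt{\lambda_{\max}(\cdot)}$ in the Loewner order, covariance-monotonic feasibility, and optimality of $\pi^C$ over a class that contains $K_M$ to force equality. You differ in how you organize the replacements. The paper says the argument follows ``exactly as in'' \cref{thm:markov}. It then bounds $\widehat J_{\mathrm{VaR}}(\pi^C)-\widehat J_{\mathrm{VaR}}(\pi')$ by citing \cref{lem:covariance-diff} with $\pi^C$ as the baseline. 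That lemma's hypothesis, $\delta u_t=H_t\delta x_t$ a.s.\ for $t>k$ under $\pi^C$, was supplied in \cref{thm:markov} by a backward induction that needed a strict cost decrease. The $\lambda_{\max}$ surrogate does not give strict decrease, as the paper's subsequent remark points out, so the paper never establishes that hypothesis for $k<N-1$.

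Your telescoping comparison of consecutive hybrids $\pi'(k+1)$ and $\pi'(k)$ makes the hypothesis hold by construction. You also note that the joint law of $(\delta x_k,\delta u_k)$ under $\pi'(k+1)$ coincides with that under $\pi^C$; this is exactly what keeps the fixed gains $H_k$ from \cref{alg:markov-construction} valid in \cref{lem:ek-properties}. Your route is therefore the fully rigorous version of what the paper sketches. It yields the chains $P_{x,t}^{M}=P_{x,t}^{(0)}\preceq\cdots\preceq P_{x,t}^{(N)}=P_{x,t}^{C}$ (and likewise for $P_{u,t}$) directly, rather than by appeal to an induction that is unavailable here.
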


\begin{proof}
    The controller construction, mean preservation, covariance dominance, and
    feasibility follow exactly as in \cref{thm:markov}, with feasibility given
    by \cref{lem:residual-dominance}(b). The only change is the cost comparison
    in \cref{lem:residual-dominance}(a). For the replacement $\pi'$ at time
    $k$, the control means are unchanged and
    \begin{equation*}
		\begin{split}
        &\widehat J_{\mathrm{VaR}}(\pi^C)
        -\widehat J_{\mathrm{VaR}}(\pi') \\
        &\quad =\alpha_{m,\gamma}\sum_{t=k}^{N-1}
        \left(
            \sqrt{\lambda_{\max}(P_{u,t})}
            -\sqrt{\lambda_{\max}(P_{u,t}')}
        \right) \\
        &\quad \geq 0.
		\end{split}
    \end{equation*}
    The last inequality follows from the fact that $P_{u,t}'\preceq P_{u,t}$ by
    \cref{lem:covariance-diff}, and the maximum eigenvalue is
    nondecreasing in the Loewner order. Applying the construction of \cref{thm:markov}
    therefore gives a feasible $\pi^M$ satisfying the stated covariance
    inequalities and
    \begin{equation} \label{eq:var-cost-ineq}
        \widehat J_{\mathrm{VaR}}(\pi^M)
        \leq\widehat J_{\mathrm{VaR}}(\pi^C).
    \end{equation}
    Optimality of $\pi^C$ implies equality, so $\pi^M$ is also optimal.
\end{proof}

\begin{remark}
	Unlike \cref{thm:markov}, \cref{prop:var-markov-recovery} does not imply that $\pi^C$ and $\pi^M$ produce the same covariance trajectories.
	Define 
	\begin{equation*}
	\Delta P_{u,k} := P_{u,k}^C - P_{u,k}^M \succeq 0,
	\end{equation*}
	which corresponds to the final terms in \cref{eq:P_uk-diff,eq:P_ut-diff}.
	The maximum eigenvalue operator appearing in \cref{eq:control-var} is not strictly increasing under every nonzero positive-semidefinite perturbation. For example,
	\begin{equation*}
		P_{u,k}^M=\mathrm{diag}(1,0),
		\qquad
		\Delta P_{u,k}=\mathrm{diag}(0,1/2)
	\end{equation*}
	give 
	\begin{equation*}
		\lambda_{\max}(P_{u,k}^C) = \lambda_{\max}(P_{u,k}^M + \Delta P_{u,k})
		= 1 = \lambda_{\max}(P_{u,k}^M),
	\end{equation*}
		i.e., $\Delta P_{u,k}$ need not be zero for the maximum eigenvalues to agree. Hence, the state-history-affine and recovered Markovian controllers may have different state and control covariance histories while attaining the same objective value and satisfying the same set of constraints. However, due to equality in \cref{eq:var-cost-ineq}, the maximum eigenvalues of the control covariances must be equal for all $k$.

	Adding a control-quadratic term $\E{u_k^\top R_k u_k}$ for each $k$ with $R_k \succ 0$ to \cref{eq:control-var} \cite{kumagaiRobustCislunarLowThrust2025} penalizes $P_{u,k}$ ``from all directions'' and forces $P_{e,k}=0$ for all $k$, allowing the statement of \cref{thm:markov} to apply.

\end{remark}

\begin{remark}[Extension to State VaR Surrogate and Output Feedback]
	The conclusion of \cref{prop:var-markov-recovery} holds as stated when
	the cost function additionally penalizes a state VaR surrogate. 
	Minimizing the VaR (or conditional VaR) of the state is often considered in risk-sensitive planning and control \cite{ningOnlineLearningBased2021,chapmanRiskSensitiveSafetyAnalysis2022,millerConvexComputationValueatRisk2025,qiSafeManeuverPlanning2026}.
	For
    $x_k\sim\normal(\mu_k,P_{x,k})$, consider the surrogate for the VaR cost of the state Euclidean norm
    \begin{equation*}
        \widehat J_{\mathrm{VaR}}^x(\pi)
        :=
        \sum_{k=0}^{N}
        \left(
            \|\mu_k\|_2
            +\alpha_{n,\gamma_x}
            \sqrt{\lambda_{\max}(P_{x,k})}
	        \right),
	    \end{equation*}
		for some $\gamma_x\in(0,1)$. This surrogate can also be expressed in convex form via \cref{eq:K-L}.
		
		The output-feedback analogue follows by applying the same argument to the
		estimate process, as in \cref{prop:output-fb}. By
		\cref{lem:output-fb-moments}, the controller-dependent moments retain the
		same form and the estimation-error covariance is controller-independent;
		hence the proof of \cref{prop:var-markov-recovery} is unchanged.
\end{remark}

\begin{figure*}[!b]
	\centering
	\includegraphics[width=0.94\textwidth]{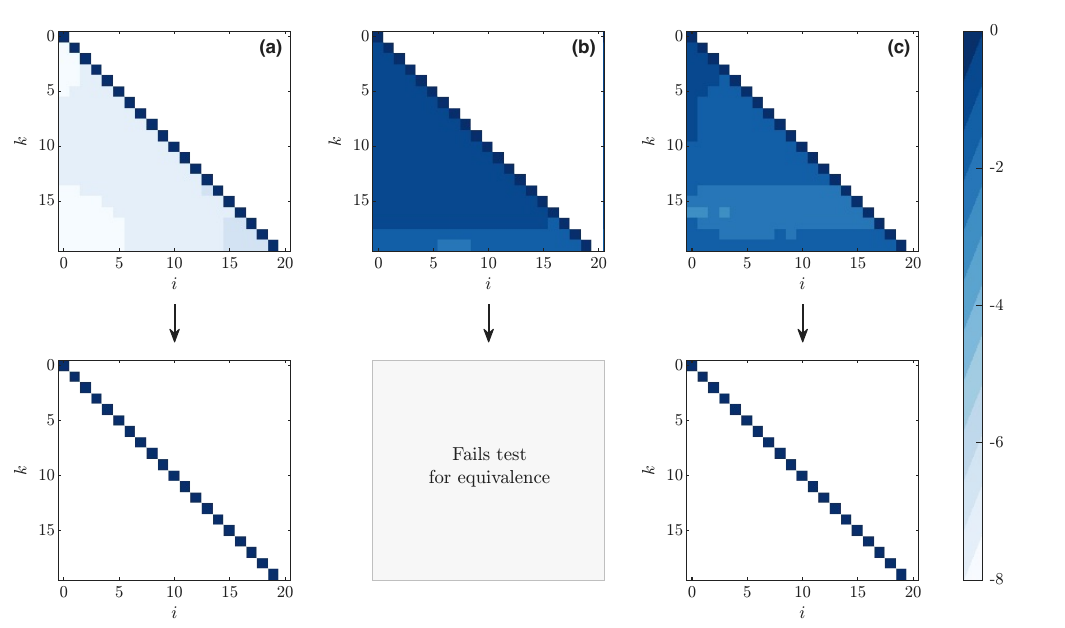}
	\caption{Heatmaps of $\log_{10}\|K_{k,i}\|_F$ (top row) and the corresponding $K_M$ (bottom row) for (a) the nondegenerate constrained case, (b) the case where $L$ is restricted to be block diagonal, and (c) the singular output-feedback case. 
	In (b), the constructed gain shows large residuals in the test for policy equivalence, as shown in \cref{tab:markov-diagnostics}. Hence, $\pi^M$ constructed using \cref{alg:markov-construction} should not be used, as it is not equivalent to $\pi^C$ and can violate constraints.
	In contrast, although $K$ in (c) contains significant off-diagonal blocks, \cref{alg:markov-construction} recovers an equivalent current-estimate-affine policy in the sense of \cref{prop:output-fb}.}
	\label{fig:gain-heatmaps}
\end{figure*}

\section{Numerical Demonstration} \label{sec:numerical}

For the numerical study, we use the Youla parameterization for convex reformulation \cref{eq:K-L}. The problems are implemented in YALMIP \cite{lofbergYALMIPToolboxModeling2004} and solved with MOSEK \cite{mosekapsMOSEKOptimizationToolbox2025}\footnote{Code available at \url{https://github.com/naoya-kumagai/markov-cs-matlab}.}.

For each solution, let $K_{\mathrm{off}}$ contain the off-diagonal blocks of $K$. $K_M$ is recovered using \cref{alg:markov-construction} and \cref{eq:K_M}. We report
\begin{align}
\delta_{\mathrm{off}}
&:=\frac{\|K_{\mathrm{off}}\|_F}{\|K\|_F}, \nonumber\\
\delta_{\mathrm{cond}}
&:=\max_k\frac{\|P_{u,k}-P_{ux,k}P_{x,k}^{\dagger}P_{ux,k}^{\top}\|_F}
{\|P_{u,k}\|_F}, \nonumber\\
\delta_{\mathrm{supp}}
&:=\frac{\|(K-K_M)P_X^{1/2}\|_F}
{\|KP_X^{1/2}\|_F}. \nonumber
\end{align}
$\delta_{\mathrm{off}}$ measures the magnitude of the off-diagonal blocks of $K$; $\delta_{\mathrm{cond}}$ evaluates the validity of the equality in \cref{eq:P_e-equality}; $\delta_{\mathrm{supp}}$ evaluates the validity of the equality in \cref{lem:equivalence-stochastic-support}. All are normalized.
Here, $x_k$ denotes the state estimate in the output-feedback cases. The first metric measures the magnitude of the off-diagonal blocks of $K$; the latter two measure the equivalence of the implemented state-history-affine and Markovian controls. By \cref{thm:markov,thm:nondegenerate}, near-zero $\delta_{\mathrm{off}}$ implies near-zero $\delta_{\mathrm{cond}}$ and $\delta_{\mathrm{supp}}$, but the converse is not true.
Note that the numerators in the metrics have different units: for $\delta_{\mathrm{off}}$, the units are those of the gain matrix; for $\delta_{\mathrm{cond}}$, they are those of the squared control; and for $\delta_{\mathrm{supp}}$, they are those of the control. Hence, $\epsilon$-optimality of the convex solver may yield different orders of magnitude for each metric, even if the Markovian equivalence is exact.

\subsection{Problem Setup}
The \emph{nondegenerate constrained} case uses the double-integrator problem with perfect state knowledge from \cite{okamotoOptimalCovarianceControl2018}, with $N=20$, $P_0=\operatorname{diag}(0.1,0.1,0.01,0.01)$, $G_k=0.01I_4$, and $P_f=0.5P_0$. Two state chance constraints are imposed using $(a_1,b_1)=([0.2,-1,0,0]^\top,0.2)$ and $(a_2,b_2)=([0.2,1,0,0]^\top,0.2)$, each with violation probability $5\times10^{-4}$. Here, $P_X\succ0$ so \cref{thm:nondegenerate} (and \cref{thm:markov}) apply.

The \emph{restricted-$L$} case uses the same fully observed dynamics, distributions, cost, and state chance constraints as the nondegenerate constrained case. The only difference is that the convex variable $L$ is constrained to be block diagonal. The argument in \cref{sec:limiting-sol-space} applies.

The \emph{singular output-feedback} case uses the same dynamics and constraints, with
\begin{equation*}
C_k=\begin{bmatrix}0_{3\times1}&I_3\end{bmatrix},\qquad D_k=10^{-3}I_3.
\end{equation*}
For this case only, the initial covariance is divided between the prior estimate and error covariances in proportions $0.75$ and $0.25$, respectively. Each four-dimensional estimate update is driven by a three-dimensional innovation, so the stacked estimate covariance is singular. Hence, $P_{\widehat{X}}$ is singular and we cannot expect $K$ to be block-diagonal as in \cref{thm:nondegenerate}, while \cref{prop:output-fb} holds.

\begin{table}[H]
\centering
\caption{Markov-policy diagnostics for the three numerical cases.}
\label{tab:markov-diagnostics}
\begin{tabular}{lccc}
\toprule
 & Nondegenerate & Restricted $L$ & Singular output \\
\midrule
$\delta_{\mathrm{off}}$ & $3.57\times10^{-7}$ & $4.00\times10^{-1}$ & $8.05\times10^{-2}$ \\
$\delta_{\mathrm{cond}}$ & $1.19\times10^{-12}$ & $5.98\times10^{-1}$ & $1.34\times10^{-9}$ \\
$\delta_{\mathrm{supp}}$ & $2.39\times10^{-7}$ & $2.60\times10^{-1}$ & $8.58\times10^{-6}$ \\
\bottomrule
\end{tabular}
\end{table}

\subsection{Results}
The results are summarized in \cref{tab:markov-diagnostics} and \cref{fig:gain-heatmaps}.
In the nondegenerate case, all three residuals are small, consistent with $K=K_M$ in \cref{thm:nondegenerate}. Note that in this case, although the diagonal blocks of $K$ can be used directly to construct $H_k$, the formula \cref{eq:H-e-def} is used to compute $\delta_{\mathrm{cond}}$ and $\delta_{\mathrm{supp}}$ to enable a fair comparison with the other two cases.

In the restricted-$L$ case, all three metrics are large; limiting the convex variable $L$ produces a controller that has genuine history dependence which cannot be equivalently represented, at least via \cref{alg:markov-construction}. This result is consistent with the discussion in \cref{sec:limiting-sol-space}.

In the singular output-feedback case, the off-diagonal blocks of $K$ have nonnegligible magnitudes, but both $\delta_{\mathrm{cond}}$ and $\delta_{\mathrm{supp}}$ remain small. Thus, the results provide numerical evidence of almost-sure equivalence.

\subsection{Solver Tolerance Study}
To distinguish structural residuals from finite solver accuracy, \cref{fig:tolerance-markov-metrics} repeats all three cases with MOSEK convergence tolerances (primal/dual feasibility and relative gap) from $10^{-4}$ to $10^{-9}$. Both $\delta_{\mathrm{cond}}$ and $\delta_{\mathrm{supp}}$ decrease by several orders of magnitude as the tolerance is tightened in the nondegenerate and singular output-feedback cases. In contrast, the restricted-$L$ residuals remain nearly constant, with $\delta_{\mathrm{cond}}$ and $\delta_{\mathrm{supp}}$ on the order of $10^{-1}$, confirming that they arise from the policy restriction rather than solver tolerance.

\begin{figure}[!t]
	\centering
	\includegraphics[width=\linewidth]{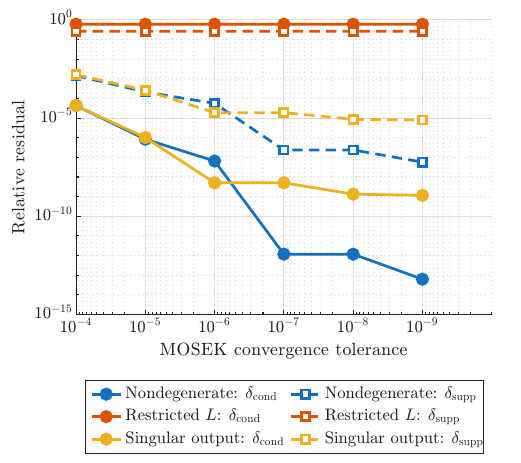}
	\caption{Markov-equivalence residuals versus the common MOSEK primal-feasibility, dual-feasibility, and relative-duality-gap convergence tolerance.}
	\label{fig:tolerance-markov-metrics}
\end{figure}


\section{Conclusions}
We investigate the optimality of deterministic Markovian policies for chance-constrained covariance steering.
We provide an algorithm that takes an optimal state-history-affine policy as input and constructs a Markovian policy that we prove to produce identical controls almost surely and thus the same state and control moments.
Put another way, the equivalence results show that Markovian policies are optimal within state-affine policies, for chance-constrained covariance steering problems where the constraints are covariance-monotonic. This extends the known result for unconstrained problems.  
We can now interpret the use of state-history-affine policies (even for chance-constrained problems) as a lifting of the Markovian policy space; at optimality, the lifted solution can be projected back losslessly.
For a horizon of length $N$, the constructed Markovian controller reduces online gain storage from $N(N+1)/2$ matrices to $N$ matrices and eliminates the need to retain the state history and its corresponding gains during execution. We extend the results to output-feedback scenarios and convex surrogates for value-at-risk.
We validate the policy equivalence numerically.

\crefalias{section}{appendix}
\useRomanappendicesfalse
\appendices

\section{Block Matrix Definitions} \label{sec:block-matrix-definitions}
\counterwithin{equation}{section} 
\setcounter{equation}{0}
\renewcommand{\theequation}{\thesection\arabic{equation}} 

For $0\leq i\leq j\leq N$, define the state-transition matrix
\begin{equation}
	\Phi_{j,i}:=
	\begin{cases}
		I, & j=i,\\
		A_{j-1}A_{j-2}\cdots A_i, & j>i.
	\end{cases}
\end{equation}
The lifted matrices in the batch dynamics are
\begin{equation}
	A:=
	\begin{bmatrix}
		\Phi_{0,0}^\top & \Phi_{1,0}^\top & \cdots & \Phi_{N,0}^\top
	\end{bmatrix}^\top
	\in\R^{n(N+1)\times n},
\end{equation}
and the $(j,i)$ blocks of $B$ and $G$ are
\begin{subequations}\label{eq:lifted-BG-definitions}
\begin{align}
	[B]_{j,i}
	&:=\begin{cases}
		\Phi_{j,i+1}B_i, & 0\leq i<j\leq N,\\
		0, & \text{otherwise},
	\end{cases}\\
	[G]_{j,i}
	&:=\begin{cases}
		\Phi_{j,i+1}G_i, & 0\leq i<j\leq N,\\
		0, & \text{otherwise}.
	\end{cases}
\end{align}
\end{subequations}
Thus, $B\in\R^{n(N+1)\times mN}$ and $G\in\R^{n(N+1)\times lN}$. Both are strictly block lower triangular.

Define the block down-shifted matrices by
\begin{subequations}
\begin{align}
	[Z_A]_{j,i}
	&:=\begin{cases}
		A_i, & j=i+1,\\
		0, & \text{otherwise},
	\end{cases}\\
	[Z_B]_{j,i}
	&:=\begin{cases}
		B_i, & j=i+1,\\
		0, & \text{otherwise}.
	\end{cases}
\end{align}
\end{subequations}
Here, $Z_A\in\R^{n(N+1)\times n(N+1)}$ and $Z_B\in\R^{n(N+1)\times mN}$. The open-loop response matrix
\begin{equation}\label{eq:F-definition}
	F:=(I-Z_A)^{-1}
\end{equation}
is block lower triangular, with
\begin{equation} \label{eq:F-blocks}
	[F]_{j,i}:=
	\begin{cases}
		\Phi_{j,i}, & j\geq i,\\
		0, & j<i.
	\end{cases}
\end{equation}
In particular, $F^{-1}=I-Z_A$ and $B=FZ_B$.

Finally, define
\begin{subequations}
\begin{align}
	\Sigma_w
	&:=\operatorname{blkdiag}
	(P_0,G_0G_0^\top,\dots,G_{N-1}G_{N-1}^\top),\\
	S
	&:=AP_0A^\top+GG^\top
	 =F\Sigma_wF^\top. \label{eq:S-definition}
\end{align}
\end{subequations}
Both $\Sigma_w$ and $S$ belong to $\R^{n(N+1)\times n(N+1)}$.
See the proof of \cref{lem:PD-equivalence} for the derivation of \cref{eq:F-blocks} and the second equality in \cref{eq:S-definition}.

\section{General Solution under Singular State Covariance} \label{sec:general-solution-singular-covariance}

If any state covariance matrix is singular, \cref{eq:H-e-def} in \cref{alg:markov-construction} is not a unique solution for $H_k$.
To see this, first, note a known result concerning pseudoinverses:
\begin{lemma}[Ch. 2, \cite{benIsraelGeneralizedInverses2003}] \label{lem:pseudoinv}
	For matrices $C \in \R^{p \times q}$ and $D \in \R^{r \times q}$, the following are equivalent:
	\begin{itemize}
		\item There exists $X \in \R^{r \times p}$ such that $XC = D$.
		\item $D = D C^{\dagger} C $
	\end{itemize}
	If these hold, the general solution for $X$ is
	\begin{equation} \label{eq:general-sol-pseudoinv}
		X = D C^\dagger + M (I - C C^\dagger)
	\end{equation}
	for arbitrary $M \in \R^{r \times p}$.
\end{lemma}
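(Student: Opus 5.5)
We need to prove the pseudoinverse lemma: existence of $X$ with $XC=D$ if and only if $D=DC^\dagger C$, together with the general solution formula \cref{eq:general-sol-pseudoinv}. The plan is to work only with the Penrose identities $CC^\dagger C=C$, $C^\dagger CC^\dagger=C^\dagger$, and the symmetry of $CC^\dagger$ and $C^\dagger C$. We treat the equivalence first and the parametrization second.

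\textbf{Equivalence.} If $D=DC^\dagger C$, then $X=DC^\dagger$ is a solution by inspection, since $XC=DC^\dagger C=D$. Conversely, suppose some $X$ satisfies $XC=D$. Right-multiplying $CC^\dagger C=C$ by nothing more than $X$ gives
\begin{equation*}
DC^\dagger C=XCC^\dagger C=XC=D.
\end{equation*}
This is the same kernel-inclusion fact used in the proof of \cref{lem:ek-properties}(a): solvability means $\ker C\subseteq\ker D$. The matrix $I-C^\dagger C$ is the orthogonal projector onto $\ker C$, so the condition says exactly that $D$ annihilates $\ker C$.

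\textbf{General solution.} First we check that every matrix of the form \cref{eq:general-sol-pseudoinv} solves the equation. Using $(I-CC^\dagger)C=C-CC^\dagger C=0$, we get
\begin{equation*}
XC=DC^\dagger C+M(I-CC^\dagger)C=D+0=D.
\end{equation*}
Next we show every solution has this form. Given any solution $X$, take $M=X$. Then
\begin{equation*}
DC^\dagger+X(I-CC^\dagger)=XCC^\dagger+X-XCC^\dagger=X,
\end{equation*}
where the first step substitutes $D=XC$. So $X$ is recovered with this choice of $M$.

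The only potentially delicate step is this surjectivity direction, and choosing $M=X$ resolves it immediately. This establishes the lemma.

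For its use in \cref{sec:general-solution-singular-covariance}, set $C=P_{x,k}$ and $D=P_{ux,k}$. The solvability condition is exactly the identity proved in \cref{lem:ek-properties}(a). Because $P_{x,k}$ is symmetric, $I-P_{x,k}P_{x,k}^\dagger$ is the orthogonal projector onto $\ker P_{x,k}$. Hence the non-uniqueness of $H_k$ lies only on directions where $\delta x_k=0$ almost surely, consistent with the footnote to \cref{thm:markov}.
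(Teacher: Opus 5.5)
Your proof is correct, and it is essentially the standard argument behind the result the paper cites from Ben-Israel and Greville without giving a proof: $DC^\dagger$ as a particular solution, necessity from $CC^\dagger C=C$, and completeness by choosing $M=X$. The argument uses only $CC^\dagger C=C$, so it holds for any $\{1\}$-inverse of $C$, as in that reference; one small slip is that in the necessity step you \emph{left}-multiply $CC^\dagger C=C$ by $X$, not right-multiply.
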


As may be evident from \cref{lem:pseudoinv}, \cref{eq:H-e-def} used in \cref{alg:markov-construction} is a specific solution to the equation $H_k P_{x,k} = P_{ux,k}$. We can obtain a general solution for $H_k$ as follows:
\begin{proposition}
	\cref{thm:markov} applies when $\pi^M$ is constructed with $H_k$ in \cref{alg:markov-construction} using the general form
	\begin{equation} \label{eq:H_k-general}
		H_k = P_{ux,k} P_{x,k}^{\dagger} + M (I - P_{x,k} P_{x,k}^{\dagger})
	\end{equation}
	with arbitrary $M \in \R^{m \times n}$.
\end{proposition}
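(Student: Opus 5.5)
The key observation is that the general gain \cref{eq:H_k-general} differs from the particular choice \cref{eq:H-e-def} only by a term that annihilates the state deviation almost surely. Write $H_k = H_k^0 + N_k$, where $H_k^0 := P_{ux,k}P_{x,k}^\dagger$ and $N_k := M(I - P_{x,k}P_{x,k}^\dagger)$. The plan has three steps.

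\textbf{Step 1: $N_k\,\delta x_k = 0$ almost surely.} Since $P_{x,k}$ is symmetric, $I - P_{x,k}P_{x,k}^\dagger$ is the orthogonal projector onto $\ker(P_{x,k})$. As in the proof of \cref{lem:ek-properties}(a), every $z \in \ker(P_{x,k})$ satisfies $z^\top \delta x_k = 0$ a.s. Hence $(I - P_{x,k}P_{x,k}^\dagger)\delta x_k = 0$ a.s., and equivalently
\[
N_k P_{x,k} = M(P_{x,k} - P_{x,k}P_{x,k}^\dagger P_{x,k}) = 0,
\]
so $\cov{N_k\delta x_k} = 0$. Since this quantity also has zero mean, $N_k\,\delta x_k = 0$ a.s.

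\textbf{Step 2: the lemmas hold verbatim for the general gain.} The only property of $H_k$ that \cref{lem:ek-properties} uses is $H_k P_{x,k} = P_{ux,k}$, and this holds for the general form by \cref{lem:pseudoinv}, because $P_{ux,k} = P_{ux,k}P_{x,k}^\dagger P_{x,k}$. So $\E{e_k\delta x_k^\top} = 0$ still holds. For the covariance,
\[
P_{e,k} = P_{u,k} - H_kP_{ux,k}^\top - P_{ux,k}H_k^\top + H_kP_{x,k}H_k^\top .
\]
Substituting $H_kP_{x,k} = P_{ux,k}$ and $P_{ux,k}^\top = P_{x,k}H_k^\top$ collapses this to $P_{u,k} - H_kP_{ux,k}^\top$. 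Because $N_kP_{ux,k}^\top = N_kP_{x,k}H_k^\top = 0$, this equals the same $P_{u,k} - P_{ux,k}P_{x,k}^\dagger P_{ux,k}^\top$ as before.

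\cref{lem:covariance-diff} and \cref{lem:residual-dominance} never use the specific formula for $H_t$. They use only \cref{lem:ek-properties}(a), independence of $e_k$ from the $w_i$, and the hypothesis $\delta u_t = H_t\delta x_t$ a.s. for $t > k$. This last hypothesis comes from the induction step: the induction gives $e_t = 0$ a.s. for the chosen general $H_t$.

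One point needs care. The recursion $T_{t,k}$ now involves the general $H_t$, and the term $N_t\delta x_t'$ under $\pi'(k)$ does not vanish automatically, since $\delta x_t'$ has a different distribution from $\delta x_t$. However, the proof of \cref{lem:covariance-diff} is an identity valid for whatever $H_t$ is used in both $\pi'(k)$ and the hypothesis. So the cost difference and covariance ordering still follow with these $T_{t,k}$.

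\textbf{Step 3: rerun the induction.} The backward induction of \cref{thm:markov} then goes through unchanged and gives $e_k = 0$ a.s. for every $k$.

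Alternatively, combining Steps 1 and 2 gives a shortcut. Under $\pi^C$, the control $\pi_k^M(x_k)$ built with the general gain equals the one built with $H_k^0$ almost surely. \cref{thm:markov} already gives almost-sure equality with $\pi_k^C$, so the claim follows directly. One must then check that the closed-loop trajectories generated by the two Markov policies coincide. This follows by forward induction on $k$: if the states agree a.s. up to time $k$, the state deviations are the same random variables, so $N_k\delta x_k = 0$ a.s. applies and the controls agree at time $k$.

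\textbf{Main obstacle.} The main difficulty is the distinction between the two closed loops, $\pi^C$ versus $\pi'(k)$ or $\pi^M$. The null-space argument in Step 1 applies to the state distribution under $\pi^C$, and the forward induction is what transfers it to the trajectories generated by the Markov policy.
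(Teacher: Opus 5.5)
Your Steps 2--3 are essentially the paper's proof: the argument uses only the property $H_kP_{x,k}=P_{ux,k}$, which the general form satisfies by \cref{lem:pseudoinv}, so \cref{lem:ek-properties,lem:covariance-diff,lem:residual-dominance} and the backward induction go through unchanged. Your extra checks, that $P_{e,k}$ is literally the same matrix and that \cref{lem:covariance-diff} is an identity for whatever $H_t$ is used, are correct and spell out what the paper leaves implicit; your Step 1 shortcut, reducing directly to \cref{thm:markov} via $M(I-P_{x,k}P_{x,k}^\dagger)\delta x_k=0$ a.s., is a slightly different but equally valid route.
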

\begin{proof}
	The only location where the specific definition of $H_k$ is used is in \cref{lem:ek-properties}.
	Observing the proof of (a) in \cref{lem:ek-properties}, 
	\begin{equation}
		P_{ux,k} = P_{ux,k} P_{x,k}^{\dagger} P_{x,k}
	\end{equation}
	is equivalent to \cref{eq:H_k-general}, from \cref{lem:pseudoinv}.
	The proof of (b) in \cref{lem:ek-properties} remains unchanged because, by \cref{lem:pseudoinv}, \cref{eq:H_k-general} is equivalent to $H_k P_{x,k} = P_{ux,k}$.
\end{proof}
Thus, if any state covariance matrix $P_{x,k}$ is rank-deficient, $H_k$ is not unique. However, in practice, there seems to be no need to choose a nonzero $M$ in \cref{eq:H_k-general}; that is, the solution in \cref{eq:H-e-def} can be used in every case. An analogous claim holds for the output-feedback case in \cref{prop:output-fb}.

\section*{Acknowledgment}
The authors acknowledge the use of Google Gemini and OpenAI's ChatGPT to assist in verifying the mathematical logic and streamlining the algebraic derivations of the proofs. All content was reviewed and edited by the authors, who take full responsibility for the final work.

\bibliographystyle{IEEEtran}
\bibliography{references}

\begin{IEEEbiography}[{\includegraphics[width=1in,height=1.25in,clip,keepaspectratio]{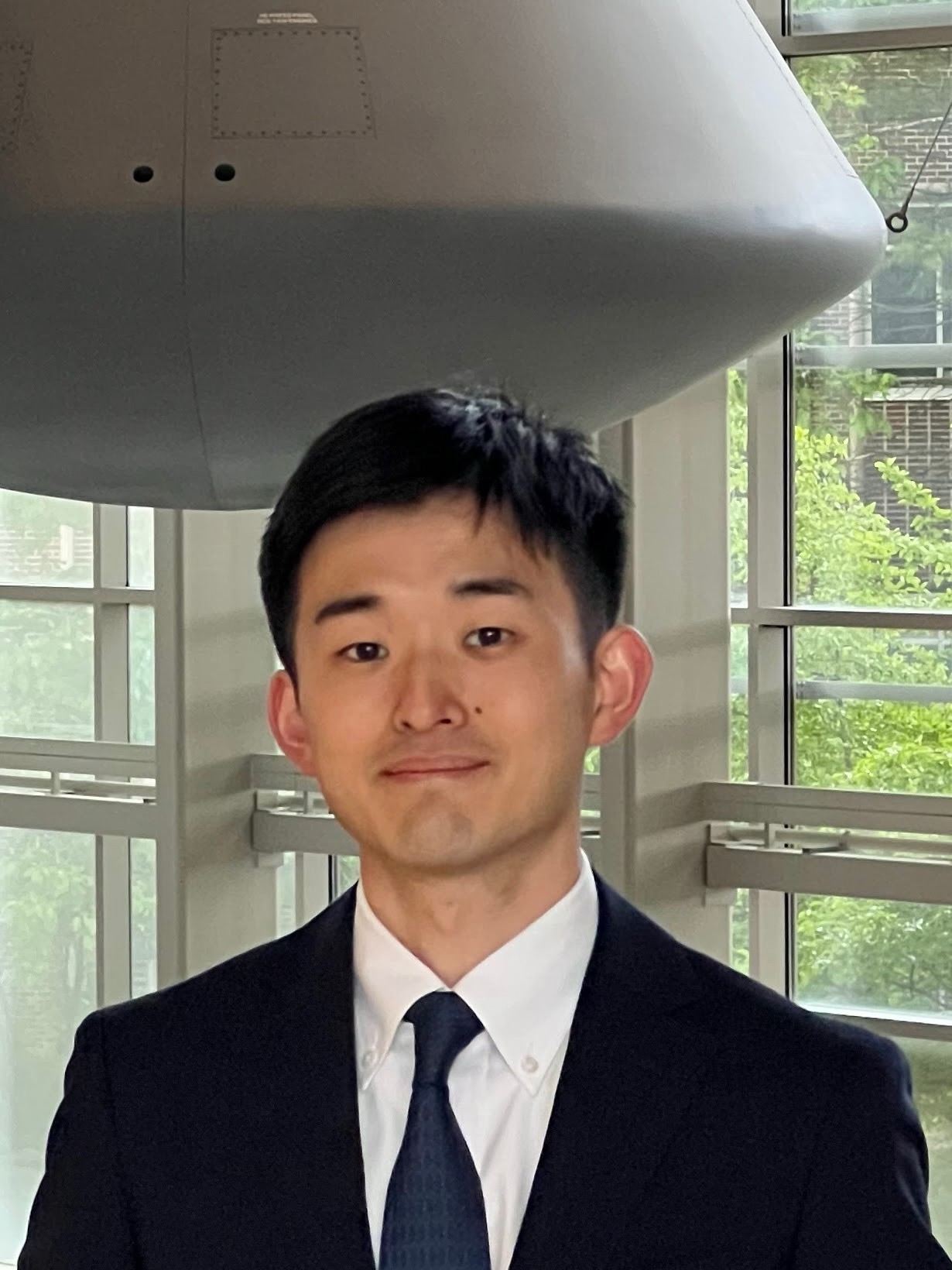}}]{Naoya Kumagai} (Graduate Student Member, IEEE) 
received the B.S. degree in Applied Mathematics from the University of California, Los Angeles, Los Angeles, CA, USA, in 2022. He is currently working toward the Ph.D. degree in aeronautics and astronautics with Purdue University, West Lafayette, IN, USA.

He has held internships in mission design and guidance, navigation, and control (GNC) at NASA JPL and ispace, inc.
His current research interests include stochastic control, optimization algorithms, and trajectory optimization.
\end{IEEEbiography}

\begin{IEEEbiography}[{\includegraphics[width=1in,height=1.25in,clip,keepaspectratio]{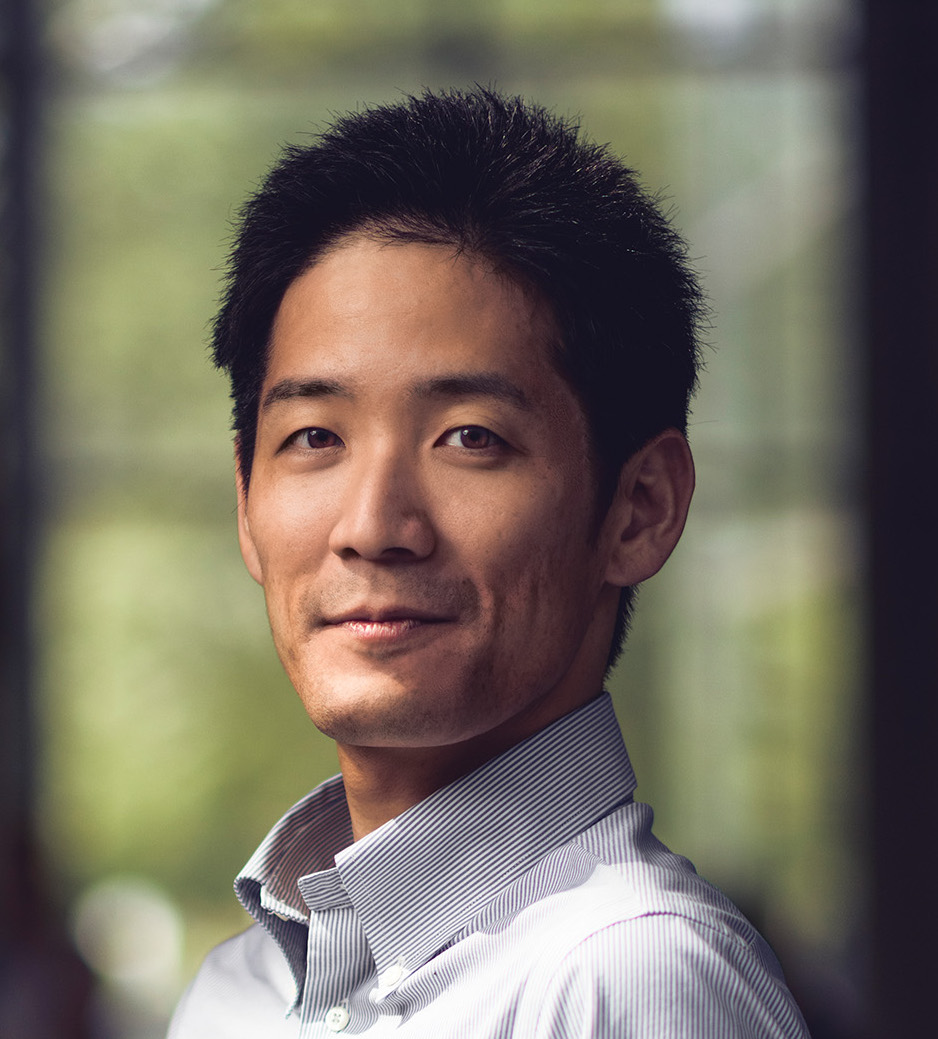}}]{Kenshiro Oguri} (Member, IEEE) is Assistant Professor of Aeronautics and Astronautics at Purdue University. He received his Ph.D. from the University of Colorado Boulder in 2021, and M.S. and B.S. from the University of Tokyo in 2017 and 2015, respectively. Prior to joining Purdue faculty in 2022, he worked for NASA JPL and JAXA. Ken's research interest includes orbital mechanics, control theory, dynamical systems, uncertainty quantification, and optimization. On the control-theoretic side, his research spans stochastic control, optimal control, and optimization. On the space application front, he develops analytical and numerical methods for space trajectory optimization, mission design, guidance navigation control (GNC), and autonomy. His research has been sponsored by NASA, JPL, AFOSR, Aerospace Corporation, and Draper Labs. Together with his students, he has published more than 120 journal and conference papers. His research has been recognized by NASA Early Career Faculty award and multiple paper awards, including AAS John V. Breakwell student paper award, AAS GNC conference best student paper award, and ACC best student paper award finalist, among others.
\end{IEEEbiography}

\end{document}